\documentclass[12pt,reqno]{amsart}
\usepackage{amsmath}
\usepackage{amsfonts}
\usepackage{amssymb}
\usepackage{caption}
\usepackage{latexsym}
\usepackage{color}
\usepackage{hyperref}
\usepackage{url}
\usepackage{enumerate}
\usepackage{multicol} %用于实现在同一页中实现不同的分栏

\renewcommand{\proof}{\noindent{\it Proof.\ \ }}
\renewcommand{\qed}{\ifmmode\square\else\nolinebreak\hfill
$\Box$\fi\par\vskip12pt}

\renewcommand\a{\alpha}    

\newcommand\Ga{\mathrm{\Gamma}}   
\newcommand\Sig{{\it \Sigma}}

\newcommand\A{\mathrm{A}}    \newcommand\C{\mathbf{C}}   \newcommand\D{\mathrm{D}}
       
\newcommand\K{\mathsf{K}}  \newcommand\M{\mathrm{M}}

\newcommand\ZZ{\mathbb{Z}}     \newcommand\BB{\mathcal{B}}

          \newcommand\Aut{\mathrm{Aut}}
   \newcommand\Cay{\mathrm{Cay}}      
      \newcommand\Inn{\mathrm{Inn}}         
            
    \newcommand\Sy{\mathrm{S}}         \newcommand\Sym{\mathrm{Sym}}
          
  \newcommand\ov{\overline}          \newcommand\la{\langle}
\newcommand\ra{\rangle}

\newcommand\AGL{\mathrm{AGL}}                  
\newcommand\ASigmaL{\mathrm{A \Sigma L}}       \newcommand\AGammaL{\mathrm{A\Gamma L}}

\newcommand\GammaL{\mathrm{\Gamma L}}          
\newcommand\GL{\mathrm{GL}}                    
                    
\newcommand\PGL{\mathrm{PGL}}                  
                  
\newcommand\PSL{\mathrm{PSL}}

\newtheorem{theorem}{Theorem}[section]%
\newtheorem{lemma}[theorem]{Lemma}%
\newtheorem{corollary}[theorem]{Corollary}%
\newtheorem{proposition}[theorem]{Proposition}%
\newtheorem{example}[theorem]{Example}%
\newtheorem{hypothesis}[theorem]{Hypothesis}%

\begin{document}

\title[On arc-transitive inner-automorphic Cayley graphs on dihedral groups]
{On arc-transitive inner-automorphic Cayley graphs on dihedral groups}

\thanks{$^*$Corresponding author. }
\thanks{2020 MR Subject Classification 20B15, 20B30, 05C25.}

\author[J.-J. Huang, J.-H. Xie]{Jun-Jie Huang, Jin-Hua Xie$^*$}
\address{Jun-Jie Huang\\
School of Mathematical Sciences, Laboratory of Mathematics
and Complex Systems, MOE\\
Beijing Normal University\\
Beijing \\
100875, P. R. China}

\address{Jin-Hua Xie\\
Center for Combinatorics, LPMC \\
Nankai University\\
Tianjin \\
300071 , P. R. China}

{\email{jjhuang@bnu.edu.cn(J.-J. Huang) jinhuaxie@nankai.edu.cn (J.-H. Xie)}
\maketitle

\begin{abstract}
A Cayley graph $\Cay(G,S)$ is said to be inner-automorphic if $S$ is a union of conjugacy classes of a group $G$, and arc-transitive if its full automorphism group acts transitively on the set of arcs. In this paper, we characterize four well-known families of arc-transitive graphs that arise as connected inner-automorphic Cayley graphs on dihedral groups, and we provide a necessary condition for other connected arc-transitive Cayley graphs on dihedral groups to be inner-automorphic. We further construct an infinite family of examples satisfying this condition, thereby demonstrating the existence of such graphs. Finally, we complete the classification of all 2-distance-transitive connected inner-automorphic Cayley graphs on dihedral groups.
\end{abstract}

\qquad {\textsc k}{\scriptsize \textsc {eywords.} arc-transitive, inner-automorphic Cayley graph, dihedral group, automorphism group} {\footnotesize}

\section{Introduction}
In this paper, all graphs are finite, simple, connected and undirected. For a graph $\Ga$, we denote by $V(\Ga)$, $E(\Ga)$ and $\Aut(\Ga)$ its vertex set, edge set, and full automorphism group, respectively. Let $G$ be a finite group. For every element $g\in G$, we define the permutations $R(g): x \mapsto xg$ and $L(g): x \mapsto g^{-1}x$ on the set $G$. The sets
$R(G):= \{R(g) \mid g \in G\}$ and $L(G):= \{L(g) \mid g \in G\}$ form regular subgroups of the symmetric group $S_G$, known respectively as the {\em right regular representation} and {\em left regular representation} of $G$. A fundamental relation between them is given by $L(G)R(G) = R(G)\Inn(G)$, where $\Inn(G)$ denotes the inner automorphism group of $G$.

Let $G$ be a finite group and $S \subseteq G \setminus \{1\}$ an inverse-closed subset of $G$ (that is, $S = S^{-1}:=\{s^{-1}\mid s\in S\}$). The {\em Cayley graph} of $G$ with respect to $S$, denoted by $\Cay(G,S)$, has vertex set $G$, and two vertices $x,y\in G$ are adjacent if and only if $y x^{-1} \in S$. Clearly, $\Cay(G,S)$ is connected if and only if $\langle S\rangle =G$. Let $\Ga=\Cay(G,S)$. It is easy to verify that the right regular representation $R(G)$ forms a regular subgroup of $\Aut(\Ga)$, and that $\Aut(G,S):=\{\a\in \Aut(G) \mid S^\a=S\}$ is a subgroup of $\Aut(\Ga)_1$, the stabilizer of the identity element $1$ in $\Aut(\Ga)$. In contrast, the left regular representation $L(G)$ is generally not contained in $\Aut(\Ga)$. A fundamental result of Godsil \cite{Godsil} shows that the normalizer of $R(G)$ in $\Aut(\Ga)$ is the semidirect product $R(G) \rtimes \Aut(G,S)$. The Cayley graph was initially introduced by A. Cayley in 1878 as a graphical representation of abstract groups. Research on Cayley graphs spans a variety of challenging topics, including the graph isomorphism problem \cite{FK,LLP07,XFZ}, graphical regular representations (GRR) of groups \cite{Xia}, symmetry and normality of Cayley graphs \cite{LP,Mar03,YFZC}, as well as the construction and classification of specific families of Cayley graphs \cite{DMM,HFZ,HFZY2025,Pan14}.

When the set $S$ is a union of conjugacy classes of $G$, the Cayley graph $\Cay(G,S)$ is said to be {\em inner-automorphic}, a notion named by Cameron~\cite{Cameron}. Clearly, $\Cay(G,S)$ is inner-automorphic if and only if $\Inn(G)\leq \Aut(\Cay(G,S))$, which in turn is equivalent to $L(G)\leq \Aut(\Cay(G,S))$, since $L(G)R(G)=R(G)\Inn(G)$. Additional equivalent characterizations can be found in Proposition~\ref{dual}. In the literature, inner-automorphic Cayley graphs have been widely studied under various names, such as ``normal Cayley graph" in \cite{LXZ2024, Roichman}, ``group graph" in \cite{Fisk}, ``conjugacy class graph" in \cite{Ito},
``quasi-abelian Cayley graph" in \cite{WX,Zgr01}, ``holomorph Cayley graph" in \cite{Li06}, ``central Cayley graph" in \cite{GGRV,PV}, and ``dual Cayley graph" in \cite{Pan}.

The study of inner-automorphic Cayley graphs dates back to Imrich's work \cite{Imrich} in 1979, which established that for $n\geq 5$, an inner-automorphic Cayley graph $\Cay(\Sy_n,S)$ has greatest possible connectivity when the set $S$ contains at least one odd permutation in the symmetric group $\Sy_n$ of degree $n$. In subsequent years, this family of graphs has continued to attract considerable interest. A number of papers have since explored a wide range of problems related to these graphs, including Hamiltonicity (see \cite{WX}), the isomorphism problem (see \cite{PV}), the property of being $c$-expanders \cite{Roichman96,Roichman}, spectral properties and connectivity \cite{Imrich,Ito,Zieschang}, the second largest eigenvalue \cite{LXZ2024}, the ultimate independence ratio and hom-idempotence \cite{HHP,LLT}, as well as symmetry properties \cite{HFYK,Pan,Zgr02}. In this paper, we focus on the study of symmetry in inner-automorphic Cayley graphs.

Now we introduce some notation. Let $u$ and $v$ be two distinct vertices of a graph $\Ga$. The {\em distance} between $u$ and $v$ in $\Ga$ is the smallest length of paths between $u$ and $v$, denoted by $d_\Ga(u,v)$. For a positive integer $i$, denote by $\Ga_i(u)$ the set of vertices at distance $i$ with vertex $u$ in $\Ga$. In particular, the neighborhood $\Ga_1(u)$ is abbreviated as $\Ga(u)$. For a positive integer $s$, an {\em $s$-arc} of $\Ga$ is a sequence of $s+1$ vertices $(u_0,u_1,\ldots,u_s)$ in $\Ga$ such that $u_i$ is adjacent to $u_{i+1}$ for $0\leq i\leq s-1$ and $u_{j-1}\neq u_{j+1}$ for $1\leq j\leq s-1$. A 1-arc is simply called an {\em arc}. Let $X$ be a transitive subgroup of  $\Aut(\Gamma)$. Then $\Ga$ is called {\em $(X,s)$-arc-transitive} if $X$ is transitive on the set of $s$-arcs of $\Ga$, and moreover, it is called {\em $(X,s)$-distance-transitive}, if for each vertex $u\in V(\Ga)$, the stabilizer $X_u$ is transitive on $\Ga_i(u)$ for all $i\leq s$. When $X=\Aut(\Ga)$, we simply say that $\Ga$ is {\em $s$-arc-transitive} or {\em $s$-distance-transitive}, respectively. Furthermore, a $1$-arc-transitive graph is called an {\em arc-transitive} or {\em symmetric} graph.

The symmetry of inner-automorphic Cayley graphs has emerged as an important research theme in recent algebraic graph theory.
For example, Li~\cite{Li06} showed that every connected $(X,2)$-arc-transitive inner-automorphic Cayley graph $\Cay(G,S)$ with $X=R(G)L(G)$ is isomorphic to the complete bipartite graph $\K_{3,3}$. The classification of $2$-arc-transitive inner-automorphic Cayley graphs on nonabelian simple groups were given in~\cite{Pan}. More recently, the first author and coauthors classified all $3$-arc-transitive inner-automorphic Cayley graphs, and investigated the properties of $2$-distance-transitive inner-automorphic Cayley graphs on nonabelian simple groups, refer to~\cite{HFYK}.

Building on existing results, we investigate the arc-transitivity of inner-automorphic Cayley graphs on dihedral groups. Considerable progress has been made in classifying {\em dihedrants} (a Cayley graph on a dihedral group) with specific symmetries. For example, the distance-transitive and locally primitive cases were classified in~\cite{MP06} and~\cite{Pan14}, respectively. The classification of 2-arc-transitive dihedrants was completed through a sequence of papers~\cite{DMM,Jin23,QDK}. More recently, Huang et al.~\cite{HFZY2025} provided a full classification of connected 2-distance-transitive dihedrants, showing that each such graph is either a complete multipartite graph or a known 2-arc-transitive example. Despite these achievements, a complete classification of arc-transitive dihedrants remains an open and challenging problem. This has motivated the study of families satisfying additional constraints. For example, Kov\'{a}cs~\cite{Kov} classified arc-transitive Cayley graphs on dihedral groups $\D_{2p^n}$ for a prime $p$; arc-regular actions on dihedrants were examined in~\cite{KMM}; and arc-transitive dihedrants of small valency have been studied in~\cite{DFKX,KO,WX06}.

This paper is devoted to the study of arc-transitive inner-automorphic Cayley graphs on dihedral groups. Our first theorem shows that, with one exceptional case, all such graphs are known.

\begin{theorem}\label{Thm:dihedrant}
Let $\Ga=\Cay(G,S)$ be a connected arc-transitive inner-automorphic Cayley graph, where $G=\la a,b\mid a^n=b^2=1,a^b=a^{-1}\ra\cong\D_{2n}$ with $n\geq 2$. Then one of the following holds:
\begin{enumerate}[\rm (i)]
  \item $\Ga\cong\K_{n,n}$, and either $S=\{a^ib\mid 1\leq i\leq n\}$, or $n=2k$ is even and $S=\{a^{2i}b, a^{2i+1}\mid 1\leq i\leq k\}$ or $S=\{a^{2i+1}b, a^{2i+1}\mid 1\leq i\leq k\}$;
  \item $\Ga\cong\K_{n,n}-n\K_2$ with $n=2k$ even and $k\geq3$ odd, and either $S=\{a^{2i}b, a^{2j+1}\mid 1\leq i,j\leq k,2j+1\neq k\}$ or $S=\{a^{2i+1}b, a^{2j+1}\mid 1\leq i,j\leq k,2j+1\neq k\}$;
  \item $\Ga\cong\K_{2n}$ and $S=G\setminus\{1\}$;
  \item $\Ga\cong\K_{m[t]}$ and $S=G\setminus H$ and $H$ is the unique subgroup of $\la a\ra$ of order $t$, where $m\geq 3$, $t\geq 2$ and $mt=2n$;
  \item $\Ga$ is a bipartite graph with biparts $\la a^2,b\ra$ and $\la a^2,b\ra a$, of girth $4$ and diameter $3$,
        and $(a^\pi b)^G\subset S\subseteq (a^\pi b)^G\cup\la a^2\ra a$ with $n=2k$, $\pi\in\{0,1\}$ and $0<|\la a^2\ra a\cap S|\leq k-2$.
\end{enumerate}
\end{theorem}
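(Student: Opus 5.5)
The plan is to start from the conjugacy class structure of $G=\D_{2n}$ and use arc-transitivity to control the shape of $S$. The conjugacy classes are $\{1\}$, $\{a^i,a^{-i}\}$, and the reflections. If $n$ is odd, all reflections form one class $\la a\ra b$. If $n=2k$ is even, the reflections split into $\la a^2\ra b$ and $\la a^2\ra ab$, and $a^k$ is central.

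Since $\la S\ra=G$, the set $S$ contains some reflections, and hence a full reflection class $C=(a^\pi b)^G$ for some $\pi$ (with $\pi=0$ when $n$ is odd). Write $S=C\cup T\cup R$, where $R\subseteq\la a\ra$ and $T$ is the other reflection class, if present.

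Next use that $X=\Aut(\Ga)$ contains $R(G)\rtimes\Inn(G)$ and also $L(G)$, so $\Ga$ admits the regular subgroup $\la a\ra$ acting on both sides. The key lemma I would aim for concerns the common neighbourhoods of $1$ and $s\in S$. Arc-transitivity forces $\lambda(s)=|S\cap Ss|$ to be constant on $S$, and $S\cap Ss$ is easy to count for $s$ a reflection and for $s$ a rotation. Comparing $\lambda$ for a reflection and for a rotation should give a dichotomy:
\begin{enumerate}[\rm (a)]
\item $R\cap\la a^2\ra\neq\emptyset$ or $T\neq\emptyset$, which forces $S$ to contain everything outside a subgroup $H\le\la a\ra$;
\item $\Ga$ is bipartite with biparts $\la a^2,b\ra$ and $\la a^2,b\ra a$ (when $R\subseteq\la a^2\ra a$ and $T=\emptyset$, or $n$ odd and $R=\emptyset$).
\end{enumerate}

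In case (a), I would show that $\Ga$ is complete multipartite. The approach is to prove that the relation ``same neighbourhood'' (non-adjacency as an equivalence) has classes equal to the cosets of $H$. This should follow from the constant-$\lambda$ counts together with the fact that $\la a\ra$ has a unique subgroup of each order. This yields (iii) when $H=1$ and (iv) otherwise. When $|H|=n$ the graph is $\K_{2[n]}=\K_{n,n}$, which is the first option of (i).

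In case (b), with $n$ odd we get $S=\la a\ra b$ and $\Ga\cong\K_{n,n}$. With $n=2k$, the vertices $1$ and $a^2$ have neighbourhoods that intersect in $\la a^2\ra a^\pi b$ together with the part of $R$ shared after shifting. If $R=\la a^2\ra a$, we get $\K_{n,n}$, the other options of (i). If $|R|=k-1$, the missing rotation must be $a^k$ by closure under conjugation and inverses, together with the parity requirement that $k$ be odd so that $a^k\in\la a^2\ra a$. This gives $\K_{n,n}-n\K_2$, which is (ii). Otherwise $0<|R|\le k-2$, and I would show that the girth is $4$ and the diameter is $3$. The girth claim should come from the $4$-cycles $1,s,\ldots$ inside $C$. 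For the diameter, a vertex in the same bipart at distance $2$ exists by reflections, and the opposite bipart contains a vertex non-adjacent to $1$, so the diameter is $3$. This is (v).

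The main obstacle is the common-neighbour computation in the dichotomy step, in particular ruling out mixed $S$ with $R$ meeting $\la a^2\ra$ that are not of the form $G\setminus H$. There, arc-transitivity has to be used beyond mere vertex-transitivity. I would try to exploit that $\Aut(\Ga)_1$ must move a reflection neighbour to a rotation neighbour, and compare the induced subgraphs on $\Ga(1)$: the reflection class $C$ induces a specific regular subgraph of known valency, whereas the rotations do not.
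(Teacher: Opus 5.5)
Your framework matches the paper's: arc-transitivity makes $\lambda(s)=|S\cap Ss|=|\Ga(s)\cap S|$ constant on $S$, and the whole proof is a sequence of such counts.

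When $S$ contains all reflections (your case (a) with $T\neq\emptyset$, or $n$ odd with $R\neq\emptyset$), one count suffices. At a reflection, $\lambda=2|R|$, so that reflection is adjacent to all $n-1-|R|$ vertices outside $\{1\}\cup S$. Constancy of $\lambda$ then makes every vertex of $S$ adjacent to all of them, and $\{1\}\cup(G\setminus(\{1\}\cup S))$ is a subgroup of $\la a\ra$.

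Your case (b) is also essentially right. For diameter $3$, though, you must show that \emph{every} vertex of the bipart of $1$ is at distance $2$, not just that one exists. This follows from $\Ga(a^\pi b)\supseteq\la a^2\ra$ and $\Ga(r)\supseteq(a^{1-\pi}b)^G$ for $r\in R$.

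The genuine gap is the step you yourself flag as the main obstacle: $n=2k$, $T=\emptyset$ and $R_0:=R\cap\la a^2\ra\neq\emptyset$. Filing this under (a) is misleading. Here $S$ misses a whole reflection class, so it is never of the form $G\setminus H$ with $H\le\la a\ra$. What must be proved is that this configuration cannot be arc-transitive at all. Your proposal gives no argument for this, and no single reflection-versus-rotation comparison produces a contradiction.

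The paper's argument takes $\pi=1$ via $\theta_a$ and sets $R_1:=R\cap\la a^2\ra a$, which is nonempty by connectivity. It then uses three vertex types:
\begin{enumerate}
\item $\lambda(ab)=2|R_0|$.
\item For $r\in R_1$, $\lambda(r)=|R_0r\cap R_1|+|R_1r\cap R_0|\le 2|R_0|$. Equality forces $R_0r\subseteq R_1$ and $R_0\subseteq R_1r$. Hence $R_1x=R_1$ for all $x\in R_0$, and since $1\in R_1r\setminus R_0$, also $|R_1|\ge|R_0|+1$.
\item For $x\in R_0$, $S\cap Sx\supseteq(ab)^G\cup R_1$.
\end{enumerate}
Combining these gives $2|R_0|=\lambda(x)\ge k+|R_1|\ge k+|R_0|+1$, which contradicts $|R_0|\le k-1$. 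This computation is what forces $S\cap\la a^2\ra=\emptyset$ in the one-class case, and hence what separates (i)--(iv) from (v). Without it the proof is incomplete.
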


It is well-known that the complete bipartite graph $\K_{n,n}$, with $n\geq3$ is 3-arc-transitive but not 4-arc-transitive; the graph $\K_{n,n}-n\K_2$ (a complete bipartite graph minus a matching) with $n\geq4$, as well as the complete graph $\K_{2n}$, are 2-arc-transitive but not 3-arc-transitive; and the complete multipartite graph $\K_{m[t]}$} with $m\geq3$ and $t\geq 2$ is 2-distance-transitive but not 2-arc-transitive. Combining these facts with Theorem~\ref{Thm:dihedrant}, we immediately obtain the following corollary.

\begin{corollary}\label{dual:s-tran}
Let $n\geq3$ be an integer and let $\Ga=\Cay(G,S)$ be a connected inner-automorphic Cayley graph on $G\cong\D_{2n}$. Then the following statements hold:
\begin{enumerate}[\rm (i)]
  \item $\Ga$ is at most $3$-arc-transitive;
  \item $\Ga$ is $3$-arc-transitive if and only if $\Ga\cong\K_{n,n}$;
  \item $\Ga$ is $2$-arc-transitive if and only if it is isomorphic to $\Ga\cong\K_{2n}$, $\K_{n,n}$, or $\K_{n,n}-n\K_2$ with $n\geq4$.
\end{enumerate}
\end{corollary}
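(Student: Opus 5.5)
The plan is to take Theorem~\ref{Thm:dihedrant} as given and run through its five cases, using the standard facts about $s$-arc-transitivity of the graphs that occur there. First note that every connected graph in Corollary~\ref{dual:s-tran} that is $s$-arc-transitive for some $s\geq 2$ is in particular arc-transitive. Hence Theorem~\ref{Thm:dihedrant} applies, and $\Ga$ is one of $\K_{n,n}$, $\K_{n,n}-n\K_2$ (with $n$ even), $\K_{2n}$, $\K_{m[t]}$, or a graph as in case~(v).

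I would then settle $2$-arc-transitivity case by case.
\begin{enumerate}[\rm (a)]
  \item In $\K_{m[t]}$ with $m\geq3$, $t\geq2$ there are $2$-arcs $(u,v,w)$ with $u,w$ in the same part (distance $2$) and others with $u,w$ adjacent. So it is not $2$-arc-transitive.
  \item For case (v), the girth is $4$ and the diameter is $3$. A $2$-arc-transitive graph of girth $4$ has every $2$-arc in a $4$-cycle, so all vertices at distance $2$ from $u$ lie in one part. Then every vertex $w$ at distance $2$ has the same number of common neighbours with $u$; counting shows the graph is complete bipartite minus a matching. Indeed, in a bipartite $2$-arc-transitive graph of girth $4$ the stabilizer $X_u$ is $2$-transitive on $\Ga(u)$, and the standard argument (cf.\ the distance-transitive structure) forces $\K_{v,v}$ or $\K_{v+1,v+1}-(v+1)\K_2$. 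Diameter $3$ rules out $\K_{n,n}$, and $\K_{n,n}-n\K_2$ has valency $n-1$.
  \item Case (v) has valency $|S| = |(a^\pi b)^G| + |S\cap\la a^2\ra a| \leq k + (k-2) = n-2$, so it cannot equal $\K_{n,n}-n\K_2$ and is not $2$-arc-transitive.
\end{enumerate}
This step, excluding case~(v), is the main obstacle. I would lean on the known classification of $2$-arc-transitive bipartite graphs of girth $4$ and diameter $3$ as complete bipartite minus a perfect matching. If that citation is not available, I would argue directly with the $2$-transitivity of $X_u$ on $\Ga(u)$ and on $\Ga_2(u)$ in the $4$-cycle structure.

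The remaining graphs are $\K_{2n}$, $\K_{n,n}$ and $\K_{n,n}-n\K_2$, and for these I would use standard facts.
\begin{itemize}
  \item $\K_{2n}$ is $2$-arc-transitive but not $3$-arc-transitive, since a $3$-arc may close into a triangle or not.
  \item $\K_{n,n}$ with $n\geq3$ is $3$-arc-transitive, because $\Sy_n\wr\Sy_2$ acts transitively on $3$-arcs. It is not $4$-arc-transitive, because $4$-arcs split according to whether the endpoints coincide.
  \item $\K_{n,n}-n\K_2$ with $n\geq4$ is $2$-arc-transitive via $\Sy_n\times\Sy_2$, and not $3$-arc-transitive, since a $3$-arc's endpoints may be adjacent or matched.
  \item For small $n$: $\K_{n,n}-n\K_2$ with $n\leq3$ is a matching or a $6$-cycle. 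In Theorem~\ref{Thm:dihedrant}(ii) we have $n=2k\geq6$, so the condition $n\geq4$ in (iii) holds automatically.
\end{itemize}

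Assembling these gives the corollary. Part (i) follows because no graph in the list is $4$-arc-transitive. Part (ii) holds because $\K_{n,n}$ is the only $3$-arc-transitive graph in the list. Part (iii) follows from the $2$-arc-transitivity analysis above, and the converse directions come from the existence statements in Theorem~\ref{Thm:dihedrant}(i)--(iii).
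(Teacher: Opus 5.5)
Your reduction to the five cases of Theorem~\ref{Thm:dihedrant} and your treatment of $\K_{2n}$, $\K_{n,n}$, $\K_{n,n}-n\K_2$ and $\K_{m[t]}$ match the paper. You are also right that case~(v) must be ruled out explicitly; the paper's one-line derivation leaves this to the counting in the proof of Corollary~\ref{class:2distran}.

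\textbf{The gap.} Your exclusion of case~(v) in steps (b)--(c) rests on a false claim. A $2$-arc-transitive bipartite graph of girth $4$ and diameter $3$ need not be $\K_{N,N}-N\K_2$. Two counterexamples are the point--plane incidence graph of $\mathrm{PG}(3,2)$ ($30$ vertices, valency $7$, any two points on $3$ common planes) and the incidence graph of the $2$-$(11,5,2)$ biplane. Both are $2$-arc-transitive, bipartite, of girth $4$ and diameter $3$, and neither is complete bipartite minus a matching.

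For such a graph with part size $N$, valency $d$ and $c_2=|\Ga(u)\cap\Ga(w)|$ constant on $\Ga_2(u)$, double counting only yields the symmetric-design identity $d(d-1)=(N-1)c_2$, which has many solutions. So ``counting shows the graph is complete bipartite minus a matching'' is not a valid step, and step (c) falls with it.

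Your fallback does not help either. $2$-arc-transitivity gives only transitivity, not $2$-transitivity, of $X_u$ on $\Ga_2(u)$. The examples above show that no argument using only this abstract local structure can succeed.

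\textbf{The missing idea.} You need to use the Cayley structure of case~(v) to compute $c_2$.
\begin{itemize}
\item Take $\pi=1$ and $T=S\cap\la a^2\ra a$ with $|T|=t$. Then $\Ga(b)=Sb=\la a^2\ra a\cup Tb$, so $c_2=|\Ga(b)\cap S|=|T\cup Tb|=2t$.
\item By Lemma~\ref{pro:caseV}, $|S|=k+t$. Every neighbour other than $1$ of a vertex of $S$ lies in $\Ga_2(1)=(b^G\cup\la a^2\ra)\setminus\{1\}$, which has size $2k-1$.
\item $2$-arc-transitivity together with girth $4$ makes $\Aut(\Ga)_1$ transitive on $\Ga_2(1)$, so every vertex of $\Ga_2(1)$ has exactly $2t$ neighbours in $S$.
\item Double counting the edges between $S$ and $\Ga_2(1)$ gives $(k+t)(k+t-1)=(2k-1)\cdot 2t$, i.e.\ $(t-k)(t-k+1)=0$. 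This contradicts $t\le k-2$.
\end{itemize}
This is precisely the paper's argument for Corollary~\ref{class:2distran}, applied via the implication ``$2$-arc-transitive $\Rightarrow$ $2$-distance-transitive''. With it in place of (b)--(c), your proof goes through.
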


For a $2$-distance-transitive inner-automorphic Cayley graph on $G \cong \D_{2n}$, case (v) of Theorem~\ref{Thm:dihedrant} is excluded by the following result.

\begin{corollary} \label{class:2distran}
Let $\Ga=\Cay(G,S)$ be a connected inner-automorphic Cayley graph on $G\cong\D_{2n}$ with $n\geq2$.
Then $\Ga$ is $2$-distance-transitive if and only if $\Ga$ is isomorphic to one of the graphs listed in cases $(i)$--$(iv)$ of Theorem~$\ref{Thm:dihedrant}$.
\end{corollary}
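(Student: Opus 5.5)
Since a $2$-distance-transitive graph is arc-transitive (its vertex stabilizer is transitive on $\Ga(u)=\Ga_1(u)$, and the group is vertex-transitive), Theorem~\ref{Thm:dihedrant} applies, so $\Ga$ falls into one of the cases (i)--(v). The ``if'' direction is immediate from the facts recalled before Corollary~\ref{dual:s-tran}: $\K_{n,n}$, $\K_{n,n}-n\K_2$ and $\K_{2n}$ are $2$-arc-transitive and hence $2$-distance-transitive (their $2$-arcs reach every vertex at distance $2$; for $\K_{2n}$ there are no such vertices). The graph $\K_{m[t]}$ is $2$-distance-transitive because its automorphism group $\Sy_t\wr\Sy_m$ has vertex stabilizer transitive on the $(m-1)t$ neighbours and on the $t-1$ vertices of the same part. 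The small cases such as $n=2$ are covered by these same descriptions. The whole content of the proof is therefore to show that no graph in case (v) is $2$-distance-transitive.

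Suppose $\Ga$ is as in (v) and is $2$-distance-transitive. It is bipartite with parts $B_0=\la a^2,b\ra$ and $B_1=\la a^2,b\ra a$, has diameter $3$ and girth $4$, and valency $d=|S|=k+r$, where $r=|\la a^2\ra a\cap S|$ with $0<r\le k-2$. Note that $|(a^\pi b)^G|=k$ because $n=2k$. The plan is a counting argument. Vertices at distance $2$ from $1$ lie in $B_0\setminus\{1\}$, and since the diameter is $3$ every vertex of $B_0\setminus\{1\}$ is at distance exactly $2$. Hence $|\Ga_2(1)|=2k-1$ and $|\Ga_3(1)|=2k-d=k-r\ge 2$. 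By $2$-distance-transitivity, every $w\in\Ga_2(1)$ has the same number $c_2$ of common neighbours with $1$, and double counting the edges between $\Ga(1)$ and $\Ga_2(1)$ gives
\[ d(d-1)=c_2(2k-1). \]

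The key step is to compute some $c_2$ values directly and exhibit two vertices of $\Ga_2(1)$ with different $c_2$. For $w\in B_0$, the quantity $c_2(w)$ is the number of representations $w=s_2s_1$ with $s_1,s_2\in S$. Take first $w=a^{2j}\in\la a^2\ra$. The reflections $a^{i}b$ in $S$ (with $i\equiv\pi \pmod 2$) multiply pairwise to give each element of $\la a^2\ra$ exactly $k$ times. The rotation part $T=\la a^2\ra a\cap S$ is inverse-closed and contributes $\#\{(t,t')\in T^2 : t't=a^{2j}\}$. This gives $c_2(a^{2j})=k+\rho(j)$. Now take a reflection $w=a^{2j}b$. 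Its representations are of the form (reflection)$\cdot$(rotation) or the reverse; each $t\in T$ determines a unique reflection partner, and the partner lies in $S$ because the class $(a^\pi b)^G$ consists of all reflections with exponent of parity $\pi$. This gives $c_2(a^{2j}b)=2r$ if $a^{2j}b$ is in the appropriate coset, and the parity check must be done carefully. Equating $k+\rho(j)=2r$ and summing $\rho$ over $j$ gives $\sum_j\rho(j)=r^2$. Combined with $r\le k-2$, this should give a contradiction. Concretely, $2r\ge k+\rho\ge k$ forces $r\ge k/2$, while the average of $\rho$ is $r^2/k$ and we need $k+r^2/k=2r$, i.e.\ $(k-r)^2=0$, contradicting $r\le k-2$.

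The main obstacle is the bookkeeping of which reflections $a^{2j}b$ lie in $B_0$ versus $B_1$ given $\pi$, and checking that every reflection in $B_0\setminus\{1\}$ indeed has $c_2=2r$. If the parities do not match, I would instead compare $c_2$ between two rotations, or use the alternative identity obtained by counting edges between $\Ga_2(1)$ and $\Ga_3(1)$. That count gives $(2k-1)(d-c_2)=(k-r)c_3$ with $c_3=d$ (since $\Ga_3(1)$ is the set of non-neighbours of $1$ in $B_1$, and $\Ga$ is bipartite of diameter $3$). The resulting Diophantine condition can then be compared with $d(d-1)=c_2(2k-1)$ to rule out $0<r\le k-2$.
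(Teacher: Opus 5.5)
Your setup matches the paper's. After reducing to $\pi=1$ via $\theta_a$ (Corollary~\ref{coro:isomorphic}), $\Ga_2(1)=\la a^2,b\ra\setminus\{1\}$ has size $2k-1$, double counting gives $d(d-1)=c_2(2k-1)$, and $c_2(a^{2j}b)=2r$. You need the reduction because for $\pi=0$ the set $\la a^2,b\ra$ contains the edge $\{1,b\}$, so it is not a bipart.

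The finish you actually carry out is wrong. The constraint $k+\rho(j)=2r$ holds only when $a^{2j}\in\Ga_2(1)$, that is, for $j\not\equiv 0 \pmod{k}$. At $j=0$ you have $\rho(0)=r$, since $T$ is inverse-closed, and $k+r\neq 2r$. Averaging over all $k$ residues therefore smuggles in the false equation $k+r=2r$, and that is what produces $(k-r)^2=0$.

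The step provably proves too much. Take $k$ odd, $r=k-1$, $T=\la a^2\ra a\setminus\{a^k\}$. All your hypotheses hold: the graph is bipartite of diameter $3$ with $\Ga_3(1)=\{a^k\}$. Your argument would then exclude $2$-distance-transitivity. Yet this graph is $\K_{2k,2k}-2k\K_2$, which is $2$-arc-transitive; indeed $\rho(j)=k-2$ for $j\neq 0$, so $c_2=2k-2=2r$ on all of $\Ga_2(1)$.

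Your fallback does not rescue anything either. Counting edges between $\Ga_2(1)$ and $\Ga_3(1)$ with $c_3=d$ gives $(2k-1)(d-c_2)=(k-r)d$. Given $d=k+r$, this is equivalent to $d(d-1)=c_2(2k-1)$, so it carries no new information.

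The repair is immediate, and you already have the pieces.

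\begin{itemize}
\item Either sum only over $j\neq 0$. Then $\sum_{j\neq 0}\rho(j)=r^2-r$, so $(k-1)(2r-k)=r^2-r$, that is, $(r-k)(r-k+1)=0$.
\item Or, as the paper does, substitute $c_2=2r$ (computed at the reflection $b$) straight into your double count. This gives $(k+r)(k+r-1)=2r(2k-1)$, which is the quadratic $r^2-(2k-1)r+k^2-k=0$ with roots $r\in\{k,k-1\}$.
\end{itemize}

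Either way this contradicts $r\le k-2$. With that fix your argument coincides with the paper's, and the computation of $c_2$ at rotations becomes unnecessary. Your ``if'' direction is fine as written.
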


Our second theorem provides a family of arc-transitive inner-automorphic Cayley graphs that satisfies case (v) of Theorem~\ref{Thm:dihedrant}.

\begin{theorem}\label{Thm:dualCay}
Let $p$ be an odd prime and let $G=\la a,b\mid a^{4p}=b^2=1,a^b=a^{-1}\ra\cong\D_{8p}$.
Take $S_\pi=(a^\pi b)^G\cup \{a^i\mid (i,4p)=1\}$ with $\pi\in \{0,1\}$.
Then $\Cay(G,S_\pi)$ is a connected arc-transitive inner-automorphic Cayley graph. Moreover,  $\Aut(\Cay(G,S_\pi))\cong\ZZ_2^{4p}.(\Sy_{2p}\times\ZZ_2)$.
\end{theorem}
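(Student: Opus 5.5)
The plan is to identify $\Ga_\pi:=\Cay(G,S_\pi)$ explicitly as a complete bipartite graph with a union of disjoint $4$-cycles removed. The automorphism group and arc-transitivity can then be read off combinatorially. Throughout, $n=4p$ and $k=2p$.

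\emph{Step 1: inner-automorphic and connected.} In $\D_{8p}$ the class of $a^\pi b$ is $\{a^{2i+\pi}b\mid 1\le i\le 2p\}$. For the rotations, $(a^i)^G=\{a^{i},a^{-i}\}$, and $(i,4p)=1$ if and only if $(-i,4p)=1$. Hence $S_\pi$ is a union of conjugacy classes and is inverse-closed. Connectivity follows because $a\in S_\pi$ (since $(1,4p)=1$) and $a^\pi b\in S_\pi$, so $\la S_\pi\ra=G$.

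\emph{Step 2: structure of the graph.} Let $H=\la a^2,a^{1-\pi}b\ra$, a subgroup of index $2$. Every unit $i$ modulo $4p$ is odd, so $a^i\notin H$; also $a^{2i+\pi}b\notin H$. Thus $S_\pi\subseteq G\setminus H$ and $\Ga_\pi$ is bipartite with parts $H$ and $Ha$. We have
\[|S_\pi|=2p+\varphi(4p)=2p+2(p-1)=4p-2=|Ha|-2,\]
and $(G\setminus H)\setminus S_\pi=\{a^{p},a^{3p}\}$, the odd non-units. Hence the bipartite complement of $\Ga_\pi$ is $\Cay(G,\{a^p,a^{-p}\})$. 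Since $a^p$ has order $4$, this is a disjoint union of $2p$ four-cycles, namely the right cosets of $\la a^p\ra$, and each such cycle meets $H$ and $Ha$ in two vertices. Consequently $\Ga_\pi\cong\Sigma_p$, where $\Sigma_p$ is defined as follows. Take parts $U=\bigcup_{j=1}^{2p}U_j$ and $W=\bigcup_{j=1}^{2p}W_j$ with $|U_j|=|W_j|=2$, and let $u\in U_j$ and $w\in W_l$ be adjacent if and only if $j\ne l$. This graph is independent of $\pi$.

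\emph{Step 3: $\Aut(\Sigma_p)$.} For the lower bound, the group $\ZZ_2^{4p}$ of independent swaps inside each of the $4p$ pairs $U_j,W_j$, the group $\Sy_{2p}$ permuting the indices $j$ simultaneously on both sides, and the side-swap $U_j\leftrightarrow W_j$ all preserve adjacency. Together they generate a group $\ZZ_2^{4p}.(\Sy_{2p}\times\ZZ_2)$ of order $2^{4p}(2p)!\,2$.

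For the upper bound, which I expect to be the main point to argue carefully, proceed as follows.
\begin{enumerate}[(a)]
\item Two vertices have the same neighbourhood exactly when they lie in the same $U_j$ or the same $W_j$. This relation is $\Aut$-invariant, so the $4p$ pairs form a block system.
\item The $4$-sets $U_j\cup W_j$ are also blocks, since they are the unions of a pair with the pairs not adjacent to it.
\item Any automorphism therefore induces a permutation of the $2p$ indices, possibly composed with the side-swap (bipartiteness plus connectivity force the parts to be preserved or swapped). Modulo the lower-bound group, it fixes every pair setwise.
\item An automorphism fixing every pair setwise lies in $\ZZ_2^{4p}$.
\end{enumerate}
This gives equality, so $\Aut(\Ga_\pi)\cong\ZZ_2^{4p}.(\Sy_{2p}\times\ZZ_2)$.

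\emph{Step 4: arc-transitivity.} The group is transitive on vertices, using the side-swap, $\Sy_{2p}$, and the swaps within pairs. Fix $u\in U_1$. Its stabilizer contains $\Sy_{2p-1}$, acting on the indices $2,\dots,2p$, together with the swaps inside each $W_l$. This subgroup is transitive on $\Ga(u)=\bigcup_{l\ne1}W_l$, so $\Ga_\pi$ is arc-transitive.
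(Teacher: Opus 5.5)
Your proof is correct, and it takes a genuinely different and much more elementary route than the paper.

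\textbf{The paper's route.} The paper never identifies $\Ga$ globally. It passes to the quotient $\Sig$ by the orbits of $\la R(a^{2p})\ra$; these orbits are exactly your pairs $U_j$, $W_j$. It then:
\begin{enumerate}[(1)]
\item shows $\Sig\cong\K_{2p,2p}-2p\K_2$ and checks edge type by edge type that the group $K\cong\ZZ_2^{4p}$ of swaps inside blocks is the kernel of $A$ on blocks (Lemmas~\ref{quotitent} and~\ref{kernel});
\item argues that $A/K$, restricted to a bipart of $\Sig$, is primitive with a regular $\D_{2p}$;
\item invokes the classification of quasiprimitive groups containing a regular dihedral subgroup (Proposition~\ref{Qd-group}) to reduce to $\Sy_{2p}$ or a $\PGL(2,r^f)$ case, and eliminates the latter by divisibility using $\Aut(G,S)$;
\item deduces arc-transitivity from an orbit argument for $A_1$ on $S$.
\end{enumerate}

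\textbf{Your route.} You observe that the bipartite complement is $\Cay(G,\{a^{p},a^{-p}\})$, a disjoint union of $2p$ four-cycles. Hence $\Ga$ is just $\Sig$ with every vertex replaced by an independent pair of twins. Every automorphism of $\Sig$ therefore lifts, and the lower bound $\ZZ_2^{4p}\rtimes(\Sy_{2p}\times\ZZ_2)$ comes for free. That lower bound is precisely the part for which the paper needs the classification. Your twin-class argument gives the same upper bound $A/K\le\Aut(\Sig)$ that the paper uses.

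\textbf{What your route buys.} You also get more than the paper states:
\begin{enumerate}[(1)]
\item the extension splits, giving $\Sy_2\wr(\Sy_{2p}\times\ZZ_2)$;
\item independence of $\pi$ is visible, so Corollary~\ref{coro:isomorphic} is not needed;
\item arc-transitivity is read off from an explicit stabilizer.
\end{enumerate}
Your route also avoids several steps of the paper's proof that do not hold as written:
\begin{enumerate}[(1)]
\item The primitivity argument in Lemma~\ref{aut:group} is invalid. A transitive group containing a regular $\D_{2p}$ (for instance $\D_{2p}$ itself) can have blocks of size $2$ or $p$.
\item In Lemma~\ref{autom}(ii), $\Aut(G,S)\cap K=1$ fails, since $\theta_{a^{2p}}$ fixes every block.
\item In Lemma~\ref{autom}(iv), $K_1\le L$ fails, since $\a_1\in K_1$ swaps two elements of $(ab)^G\subseteq S$. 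As a result, $A_1^S$ is $\Sy_2\wr\Sy_{2p-1}$ rather than $\Sy_{2p-1}$.
\end{enumerate}
The paper's quotient-plus-classification template is the kind of argument one needs when no transparent global description is available; here it is unnecessary.

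\textbf{One fix in Step 3(b).} As literally phrased, ``the pairs not adjacent to $U_j$'' include every $U_l$ as well as $W_j$, so their union with $U_j$ is $U\cup W_j$, which is not a block. Say instead that $W_j$ is the unique pair in the opposite part with no edges to $U_j$. Equivalently, $W_j=\Ga_3(u)$ for $u\in U_j$, which uses $2p\ge 3$. This set is canonically determined, so $U_j\cup W_j$ is a block.
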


We would like to emphasize that there exist arc-transitive inner-automorphic Cayley graphs on dihedral groups
that are not isomorphic to those in Theorem~\ref{Thm:dualCay}, see Examples \ref{D60} and~\ref{D84}.

The paper is organized as follows. Section~\ref{Preliminaries} contains basic definitions and preliminary results on groups and graphs that will be used throughout. The proofs of Theorems~\ref{Thm:dihedrant} and~\ref{Thm:dualCay} are given in Sections~\ref{sec:proof} and~\ref{sec:structure}, respectively.

\section{Preliminaries}\label{Preliminaries}

This section lists definitions and properties concerning groups and graphs used throughout the paper. For a positive integer $n$, let $\ZZ_n$ denote the cyclic group of order $n$, and $\D_{2n}$ the dihedral group of order $2n$. For a prime $p$ and a positive integer $r$, denote by $\ZZ_p^r$ the elementary abelian group of order $p^r$. For two groups $A$ and $B$, denote by $A\times B$ the direct product of $A$ and $B$, by $A\rtimes B$ a semidirect product of $A$ by $B$, and by $A.B$ an extension of $A$ by $B$. For a group $G$ and an element $g\in G$, the conjugacy class of $g$ in $G$ is denoted by $g^G$.

In graph theory, we denote by $\C_n$ the cycle graph of length $n$, by $\K_n$ the complete graph of order $n$, by $\K_{n,n}$ the complete bipartite graph of order $2n$, by $\K_{n,n}-n\K_2$ the subgraph of $\K_{n,n}$ minus a matching, and by $\K_{m[t]}$ the complete multipartite graph consisting $m\geq 3$ parts of size $t\geq 2$. For a graph $\Ga$ with a nonempty subset $U\subseteq V(\Ga)$, denote by $[U]$ the subgraph induced by $U$. Let $W\subseteq V(\Ga)$ be such that $U\cap W=\emptyset$.
Denote by $[U,W]$ the bipartite subgraph with vertex set $U\cup W$ and edge set $\{\{u,v\}\in E(\Ga)\mid u\in U, v\in W\}$.

The following proposition gives necessary and sufficient conditions for a Cayley graph to be inner-automorphic,
see \cite[Theorem 2.2 and Lemma 2.4]{Pan}.

\begin{proposition}\label{dual}
Let $\Ga=\Cay(G,S)$ be a connected Cayley graph. Then the following are equivalent:
\begin{enumerate}[\rm (i)]
  \item $\Ga$ is an inner-automorphic Cayley graph;
  \item $\Inn(G)\leq\Aut(G,S)$;
   \item $L(G)\leq\Aut(\Ga)$;
  \item $\tau\in\Aut(\Ga)$, where $\tau:x\mapsto x^{-1}$ for all $x\in G$;
  \item $S=\{s_1,s_1^{-1}\}^G\cup \cdots\cup\{s_m,s_m^{-1}\}^G$ for some $s_1,\ldots, s_m\in G\setminus\{1\}$.
\end{enumerate}
\end{proposition}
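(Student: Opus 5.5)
Proposition~\ref{dual} asks for five conditions to be equivalent, and I will establish them in the chain (i)$\Leftrightarrow$(ii)$\Leftrightarrow$(v), (ii)$\Leftrightarrow$(iii) and (iii)$\Leftrightarrow$(iv). Each step is a direct computation with the adjacency rule of $\Cay(G,S)$: $x\sim y$ if and only if $yx^{-1}\in S$.

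\textbf{(i)$\Leftrightarrow$(ii)$\Leftrightarrow$(v).} Condition (i) says that $S$ is a union of conjugacy classes. This holds exactly when $S^g=S$ for every $g\in G$, which is the statement $\Inn(G)\le\Aut(G,S)$. For (v): since $S=S^{-1}$, a union of classes $s_i^G$ with $s_i\in S$ can be rewritten as $\bigcup_i\{s_i,s_i^{-1}\}^G$, because $(s^{-1})^G=(s^G)^{-1}\subseteq S$. Conversely, every set of the form in (v) is a union of conjugacy classes.

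\textbf{(ii)$\Leftrightarrow$(iii).} First I check that $L(g)\in\Aut(\Ga)$ if and only if $g^{-1}Sg=S$. Indeed, $L(g)$ sends $x,y$ to $g^{-1}x,\ g^{-1}y$, and
\[
(g^{-1}y)(g^{-1}x)^{-1}=g^{-1}(yx^{-1})g .
\]
So $L(g)$ preserves adjacency precisely when conjugation by $g$ preserves $S$, which gives the equivalence. It can also be seen from the identity $L(G)R(G)=R(G)\Inn(G)$ together with $R(G)\le\Aut(\Ga)$.

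\textbf{(iii)$\Leftrightarrow$(iv).} Here $\tau(x)=x^{-1}$. The edge $\{x,y\}$ is sent to $\{x^{-1},y^{-1}\}$, and the relevant difference is $y^{-1}x=y^{-1}(xy^{-1})y$. Using inverse-closedness, $xy^{-1}=(yx^{-1})^{-1}\in S$, so $y^{-1}x\in S^{y}$.
\begin{itemize}
\item If $S$ is conjugation invariant, then $S^y=S$, so $\tau$ is an automorphism. This gives (ii)$\Rightarrow$(iv).
\item For (iv)$\Rightarrow$(ii), suppose $\tau\in\Aut(\Ga)$. Let $s\in S$ and $g\in G$, and consider the edge $x=sg$, $y=g$, which is an edge because $yx^{-1}=s^{-1}\in S$. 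Its image under $\tau$ is an edge, so
\[
y^{-1}x=g^{-1}sg\in S .
\]
Hence $S^g=S$ for every $g\in G$.
\end{itemize}

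No step is a real obstacle; the only care needed is the left/right conventions. The one point to get right is choosing the edge $(sg,g)$ in (iv)$\Rightarrow$(ii) so that its image exhibits an arbitrary conjugate of $s$. Connectedness is not used. The equivalences can also be cited from \cite[Theorem 2.2 and Lemma 2.4]{Pan}.
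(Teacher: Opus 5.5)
Your proof is correct and complete. Note that connectedness is indeed never used.

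The paper does not prove Proposition~\ref{dual} itself. It cites Pan's Theorem 2.2 and Lemma 2.4, and the only argument it sketches (in the introduction) is a structural one for (i)$\Leftrightarrow$(iii). That sketch runs as follows: since $R(G)\le\Aut(\Ga)$ and $L(G)R(G)=R(G)\Inn(G)$, one has $L(G)\le\Aut(\Ga)$ if and only if $\Inn(G)\le\Aut(\Ga)$. An automorphism of $G$ lies in $\Aut(\Ga)$ exactly when it preserves $S$.

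You instead verify each implication elementwise from the adjacency rule $yx^{-1}\in S$. This is fully self-contained and makes the left/right bookkeeping explicit.

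The structural viewpoint gives some implications almost for free. For instance, $\tau R(g)\tau=L(g)$, so $R(G)^\tau=L(G)$, and (iv)$\Rightarrow$(iii) follows in one line from $R(G)\le\Aut(\Ga)$. Your computation is genuinely needed only for the converse direction, namely that conjugation-invariance of $S$ forces $\tau\in\Aut(\Ga)$.

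Two cosmetic points:
\begin{itemize}
\item The block headed (iii)$\Leftrightarrow$(iv) actually proves (ii)$\Leftrightarrow$(iv). This is harmless, since (ii)$\Leftrightarrow$(iii) is already established.
\item In (ii)$\Rightarrow$(iv) you only show that $\tau$ maps edges to edges. That suffices because $\tau$ is a bijection of a finite graph (indeed an involution), but it is worth saying so.
\end{itemize}
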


This equivalence explains the varying definitions of inner-automorphic Cayley graphs found in the literature~\cite{Fisk,Ito,Li06,PV,Roichman,WX}.

\medskip
Let $\Ga=\Cay(G,S)$ be a Cayley graph. Then $\Ga$ is called {\em normal}, if $R(G)$ is a normal subgroup of $\Aut(\Ga)$. It is known from~\cite{Godsil} that $N_{\Aut(\Ga)}(R(G))=R(G)\rtimes\Aut(G,S)$. Consequently, $\Ga$ is a normal Cayley graph if and only if $\Aut(\Ga)_1=\Aut(G,S)$. On the one hand, the map $\tau$ defined in Proposition~\ref{dual} is an automorphism of $G$ if and only if $G$ is abelian. Combining these two facts yields the following result.

\begin{proposition} \label{normal}
Let $\Ga=\Cay(G,S)$ be a normal inner-automorphic Cayley graph on a finite group $G$. Then $G$ is abelian.
\end{proposition}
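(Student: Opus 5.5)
The statement to prove is Proposition~\ref{normal}: a normal inner-automorphic Cayley graph forces $G$ to be abelian. I would combine Godsil's description of the normalizer with Proposition~\ref{dual}.

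First, since $\Ga$ is normal, $R(G)\trianglelefteq\Aut(\Ga)$. Hence $N_{\Aut(\Ga)}(R(G))=\Aut(\Ga)$, and by Godsil's result $\Aut(\Ga)=R(G)\rtimes\Aut(G,S)$. Because $R(G)$ is regular, this gives $\Aut(\Ga)_1=\Aut(G,S)$.

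Second, since $\Ga$ is inner-automorphic, Proposition~\ref{dual}(iv) says that the inversion map $\tau:x\mapsto x^{-1}$ lies in $\Aut(\Ga)$. It fixes the vertex $1$, so $\tau\in\Aut(\Ga)_1=\Aut(G,S)\le\Aut(G)$. Thus $\tau$ is a group automorphism of $G$.

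Finally, I would use the standard fact that inversion is a homomorphism only for abelian groups. For all $x,y\in G$ we have $(xy)^{-1}=x^{-1}y^{-1}$. Inverting both sides gives $xy=(x^{-1}y^{-1})^{-1}=yx$, so $G$ is abelian.

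The only point needing care is applying Proposition~\ref{dual}, which is stated for connected Cayley graphs. This is harmless here because all graphs in the paper are assumed connected. Alternatively, one can check directly that $\tau\in\Aut(\Ga)$ when $S$ is a union of conjugacy classes. Since $S=S^{-1}$ and $S$ is closed under conjugation, we have
\[
yx^{-1}\in S\iff xy^{-1}=(yx^{-1})^{-1}\in S\iff y^{-1}(xy^{-1})y=y^{-1}x\in S .
\]
The last condition is exactly adjacency of $x^{-1}$ and $y^{-1}$, since $y^{-1}(x^{-1})^{-1}=y^{-1}x$. So no real obstacle is expected; the argument is a direct chain of the cited facts.
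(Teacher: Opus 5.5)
Your argument is correct and follows essentially the same route as the paper: Godsil's normalizer theorem gives $\Aut(\Ga)_1=\Aut(G,S)$ for a normal Cayley graph, and the inversion map $\tau\in\Aut(\Ga)_1$ (which lies there by inner-automorphy) is a group automorphism only when $G$ is abelian. Your direct check that $\tau\in\Aut(\Ga)$ also removes the reliance on the connectivity hypothesis in Proposition~\ref{dual}.
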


The following elementary but fundamental description of the conjugacy classes in dihedral groups will be used repeatedly and is established here for convenience.

\begin{proposition} \label{conju-D2n}
Let $G=\la a,b\mid a^n=b^2=1,a^b=a^{-1}\ra\cong\D_{2n}$ with $n\geq2$. The conjugacy classes of the non-identity elements of $G$ are given as follows.
\begin{enumerate}[\rm (i)]
  \item For $n$ odd, we have $(ab)^G=\{a^ib\mid 1\leq i\leq n\}$ and $(a^i)^G=\{a^{i},a^{-i}\}$ with $1\leq i\leq (n-1)/2$.
  \item For $n=2k$ even, we have $(ab)^G=\{a^{2i+1}b\mid 1\leq i\leq k\}$, $b^G=\{a^{2i}b\mid 1\leq i\leq k\}$, $(a^k)^G=\{a^{k}\}$ and $(a^i)^G=\{a^{i},a^{-i}\}$ with $1\leq i\leq k-1$.
\end{enumerate}
\end{proposition}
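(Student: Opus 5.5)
The statement is Proposition~\ref{conju-D2n}, and I will prove it by direct computation with the defining relations. First I record the conjugation formulas. From $a^b=a^{-1}$ we get $b^{-1}a^ib=a^{-i}$, and conjugation by a power of $a$ fixes every $a^i$. For reflections,
\[
(a^ib)^{a^j}=a^{-j}a^iba^j=a^{i-2j}b,\qquad (a^ib)^{a^jb}=b\,a^{-j}a^iba^j\,b=a^{2j-i}b.
\]
Since $G=\la a\ra\cup\la a\ra b$, these formulas describe all conjugates.

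Rotations: $(a^i)^G=\{a^i,a^{-i}\}$. This set has two elements exactly when $a^i\neq a^{-i}$, that is, when $n\nmid 2i$. For $n$ odd this always holds when $a^i\neq 1$, so we may index by $1\le i\le (n-1)/2$. For $n=2k$ the only exception is $a^k$, which is central; the remaining classes are indexed by $1\le i\le k-1$.

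Reflections: by the formulas above, $(a^ib)^G=\{a^{i+2m}b,\ a^{-i+2m}b\mid m\in\ZZ\}$. Here $2m$ runs over the subgroup $2\ZZ_n$ of $\ZZ_n$.
\begin{enumerate}[\rm (i)]
  \item If $n$ is odd, $2$ is invertible mod $n$, so $2\ZZ_n=\ZZ_n$ and all reflections form the single class $(ab)^G$.
  \item If $n=2k$, then $2\ZZ_n$ has index $2$, and $-i\equiv i \pmod 2$. So the class of $a^ib$ is exactly the set of $a^jb$ with $j\equiv i\pmod 2$. This gives the two classes $b^G$ (even exponents) and $(ab)^G$ (odd exponents), each of size $k$.
\end{enumerate}

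There is no real obstacle. The only points needing care are the parity argument for $2\ZZ_n$ when $n$ is even, and the indexing conventions. Here $a^n=a^0$, so $\{a^ib\mid 1\le i\le n\}$ is all of $\la a\ra b$, and $\{a^{2i}b\mid 1\le i\le k\}$ lists each even exponent exactly once. For $n=2$ the group is $\ZZ_2^2$ and the description still holds, with no classes of size two among the rotations.
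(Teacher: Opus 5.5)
Your proof is correct and complete. The formulas $(a^ib)^{a^j}=a^{i-2j}b$ and $(a^ib)^{a^jb}=a^{2j-i}b$, together with the parity argument for even $n$ and the observation that $a^i=a^{-i}$ only when $a^i\in\{1,a^{n/2}\}$, give exactly the listed classes. The paper states this proposition as an elementary fact without giving a proof, and your direct computation is the standard verification it leaves to the reader.
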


We now describe the automorphism group of dihedral groups. Let $G=\la a,b\mid a^n=b^2=1, a^b=a^{-1}\ra\cong\D_{2n}$.
Define the following maps on the generators of $G$:
\begin{align*}
&\theta_{a^i}:a\mapsto a,~b\mapsto a^ib, \text{~where~} 1\leq i\leq n;\\
&\tau_{a^j}:a\mapsto a^j,~b\mapsto b,\text{~where~} 1\leq j\leq n \text{~and~} (j,n)=1.
\end{align*}
These maps extend to automorphisms of $G$, which we denote by the same symbols $\theta_{a^i}$ and $\tau_{a^j}$. It is well-established that the full automorphism group $\Aut(G)$ is generated by the set $\{\theta_{a^i},\tau_{a^j}\}$, also refer to~\cite[Section 3]{XFZ}.

\begin{proposition} \label{Aut:D2n}
Let $G=\la a,b\mid a^n=b^2=1, a^b=a^{-1}\ra\cong\D_{2n}$ with $n\geq2$.
Then $\Aut(G)=\la \theta_{a^i},\tau_{a^j}\mid 1\leq i,j\leq n, (j,n)=1\ra=\la\theta_{a}\ra\rtimes\la \tau_{a^j}\mid 1\leq j\leq n, (j,n)=1\ra$.
\end{proposition}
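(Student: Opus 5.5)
The plan is to show that every automorphism of $G$ is determined by the images of $a$ and $b$, and that these images have a rigid form. I would first flag a degenerate case. For $n=2$ the group $G\cong\ZZ_2^2$ has $\Aut(G)\cong\Sy_3$ of order $6$, whereas the right-hand side has order $2$. So the argument below needs $n\geq3$, and the case $n=2$ must either be excluded or treated separately; the claim as stated fails there.

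\emph{Step 1: the rotation subgroup is characteristic.} Every element $a^ib$ is an involution, since $(a^ib)^2=a^ia^{-i}=1$. For $n\geq3$ the elements of order $n$ are therefore exactly the generators $a^j$ of $\la a\ra$ with $(j,n)=1$. Hence any $\sigma\in\Aut(G)$ satisfies $a^\sigma=a^j$ with $(j,n)=1$. Since $\sigma$ preserves $\la a\ra$ and $b\notin\la a\ra$, we also get $b^\sigma=a^ib$ for some $i$.

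\emph{Step 2: every such pair occurs.} Conversely, for each $i$ and each $j$ with $(j,n)=1$, the assignment $a\mapsto a^j$, $b\mapsto a^ib$ respects the defining relations:
\[
(a^j)^n=1,\qquad (a^ib)^2=1,\qquad (a^ib)^{-1}a^j(a^ib)=a^{-j}.
\]
So it extends to an endomorphism of $G$. This endomorphism is surjective, because $a^j$ generates $\la a\ra$ and $a^ib\notin\la a\ra$, and hence it is an automorphism. Therefore $|\Aut(G)|=n\varphi(n)$, and this automorphism equals $\tau_{a^j}\theta_{a^{i'}}$ for a suitable $i'$.

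\emph{Step 3: the semidirect decomposition.} Direct computation gives $\theta_a^i=\theta_{a^i}$, since $b\mapsto ab\mapsto a^2b\mapsto\cdots$. Hence $\la\theta_a\ra$ is cyclic of order $n$ and contains every $\theta_{a^i}$. The map $j\mapsto\tau_{a^j}$ is an isomorphism from $\ZZ_n^*$ onto $T:=\la\tau_{a^j}\ra$, so $|T|=\varphi(n)$. The intersection $\la\theta_a\ra\cap T$ is trivial, because $\theta$'s fix $a$ and $\tau$'s fix $b$.

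Next, conjugating $\theta_a$ by $\tau_{a^j}$ yields $\theta_{a^{j}}$ or $\theta_{a^{j^{-1}}}$, depending on the composition convention. In either case this lies in $\la\theta_a\ra$, so $\la\theta_a\ra$ is normalized by $T$. Thus $\la\theta_a\ra T=\la\theta_a\ra\rtimes T$ is a subgroup of order $n\varphi(n)=|\Aut(G)|$, which gives equality with $\Aut(G)$.

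The only delicate point is Step 1. Characteristicity of $\la a\ra$ genuinely fails for $n=2$, which is why the small case must be set aside. The rest is routine verification.
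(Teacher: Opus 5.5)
Your proof is correct, and your objection about $n=2$ is valid. For $n=2$, $G\cong\ZZ_2^2$ and $\Aut(G)\cong\Sy_3$ has order $6$. On the right-hand side, however, $\tau_{a}$ is the identity and $\la\theta_a\ra$ has order $2$. So the proposition as stated needs $n\geq3$.

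The paper gives no argument for this proposition; it calls the result well established and cites \cite[Section 3]{XFZ}. Your route is a self-contained, elementary verification:
\begin{enumerate}[\rm (1)]
  \item You show $\la a\ra$ is characteristic by an element-order argument, which is exactly where $n\geq3$ enters.
  \item You use von Dyck's theorem to realize all $n\varphi(n)$ admissible pairs $(a^j,a^ib)$, which pins down $|\Aut(G)|$.
  \item You obtain the semidirect decomposition by checking that $\la\theta_a\ra\cap\la\tau_{a^j}\ra=1$ and that the $\tau_{a^j}$ normalize $\la\theta_a\ra$, then counting orders.
\end{enumerate}

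The citation buys brevity. Your argument can be checked on the spot and exposes the hidden hypothesis $n\geq3$.

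The exception is harmless for the rest of the paper. The paper invokes the proposition only to know that $\theta_a\in\Aut(G)$, which your Step 2 shows holds for every $n$. It also uses it to compute $\Aut(G,S)$ when $n=4p\geq12$, where the full description is valid.
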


Let $G$ be a transitive permutation group on a set $\Omega$. A non-empty subset $\Delta\subseteq \Omega$ is called a {\em block} of $G$, if for every $g\in G$, either $\Delta^g=\Delta$ or $\Delta^g\cap \Delta=\emptyset$.
A block $\Delta$ is {\em trivial} if $|\Delta|=1$ or $|\Delta|=|\Omega|$. The group $G$ is called {\em primitive} if it has only trivial blocks in $\Omega$, and {\em quasiprimitive} if every non-trivial normal subgroup of $G$ is transitive on $\Omega$. Clearly, every primitive group is quasiprimitive, but the converse is not true in general.

\begin{table}[!htb]
\caption{Quasiprimitive permutation groups with a regular dihedral subgroup}
\label{d-group}
\begin{center}
\begin{tabular}{l|l|l|l} \hline
  $G$ & $G_\a$ & $H$ & Conditions \\ \hline
  $\A_4$ & $\ZZ_3$ & $\D_4$ &   \\
  $\Sy_4$ & $\Sy_3$ & $\D_4$ &   \\
  $\AGL(3,2)$ & $\GL(3,2)$ & $\D_8$ &   \\
  $\AGL(4,2)$ & $\GL(4,2)$ & $\D_{16}$ &    \\
  $\ZZ_2^4\A_7$ & $\A_7$ & $\D_{16}$ &   \\
  $\ZZ_2^4\rtimes\Sy_6$ & $\Sy_6$ & $\D_{16}$ &   \\
  $\ZZ_2^4\rtimes\A_6$ & $\A_6$ & $\D_{16}$ &   \\
  $\ZZ_2^4\rtimes\Sy_5$ & $\Sy_5$ & $\D_{16}$ &  \\
  $\ZZ_2^4\rtimes\GammaL(2,4)$ & $\GammaL(2,4)$ & $\D_{16}$ &   \\
  $\M_{12}$ & $\M_{11}$ & $\D_{12}$ &  \\
  $\M_{22}.\ZZ_2$ & $\PSL(3,4).\ZZ_2$ & $\D_{22}$ &   \\
  $\M_{24}$ & $\M_{23}$ & $\D_{24}$ &  \\
  $\Sy_{2n}$ & $\Sy_{2n-1}$ & $\D_{2n}$ &    \\
  $\A_{4n}$ & $\A_{4n-1}$ & $\D_{4n}$ &    \\
  $\PSL(2,r^f).o$ & $\ZZ_r^f\rtimes\ZZ_{\frac{r^f-1}{2}}.o$ & $\D_{r^f+1}$ & $r^f\equiv3\pmod{4}$,~ $o\leq\ZZ_2\times\ZZ_f$  \\
  $\PGL(2,r^f).\ZZ_e$ & $\ZZ_r^f\rtimes\ZZ_{r^f-1}.\ZZ_e$ & $\D_{r^f+1}$ & $r^f\equiv1\pmod{4},~e\mid f$ \\ \hline
\end{tabular}
\end{center}
\end{table}
The quasiprimitive permutation groups that contain a regular dihedral subgroup were classified in \cite[Theorem 1.5]{Li06}, although two examples were unfortunately missed: the affine groups $\AGammaL(2,4)\cong\ZZ_2^4\rtimes\GammaL(2,4)$ and $\ASigmaL(2,4)\cong\ZZ_2^4\rtimes\Sy_5$ of degree $2^4$. A correct and complete classification was later provided in \cite[Theorem 3.3]{SLZ}.

\begin{proposition}\label{Qd-group}
Let $G$ be a quasiprimitive permutation group on a set $\Omega$ such that $G$ contains a regular dihedral subgroup $H$.
Let $\a\in\Omega$. Then $G$ is $2$-transitive on $\Omega$ and $(G,G_\a,H)$ is listed in Table~$\ref{d-group}$.
\end{proposition}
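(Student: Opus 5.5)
The plan is to reduce the quasiprimitive case to the primitive case, use a Schur-ring (B-group) argument to get $2$-transitivity, and then read off the examples from the classification of finite $2$-transitive groups. Since this is a cited classification (\cite[Theorem 1.5]{Li06}, corrected in \cite[Theorem 3.3]{SLZ}), the proof is only sketched here.

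\emph{Step 1: transitive minimal normal subgroup.} Let $N$ be a minimal normal subgroup of $G$. By quasiprimitivity $N$ is transitive, so $G=NG_\alpha=HG_\alpha$. Since $H$ is regular we also have $G=HN$ and $H\cap G_\alpha=1$. Write $H=\la a\ra\rtimes\la b\ra\cong\D_{2n}$; the cyclic subgroup $\la a\ra$ is semiregular with exactly two orbits.

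\emph{Step 2: the socle is nonabelian simple or elementary abelian.} We run through the O'Nan--Scott types of quasiprimitive groups.
\begin{itemize}
\item In types HS, HC, SD, CD and TW, the socle $T^k$ has a structure incompatible with a factorization $G=HG_\alpha$ in which $H$ has a cyclic subgroup of index $2$. The tool is the classification of factorizations of almost simple groups with a factor that is cyclic, or has a cyclic subgroup of small index.
\item In the product-action type PA, with $N=T^k$ and $k\ge2$, we project $H\cap N$ onto the simple factors. Since $H$ is metacyclic, at most two orbits of $\la a\ra$ arise, which forces $k\le2$; the case $k=2$ is then excluded by a counting argument on $|T:T_\beta|$.
\end{itemize}
Hence $N$ is either elementary abelian (type HA) or nonabelian simple (type AS), and in both cases $G$ is primitive.

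\emph{Step 3: $2$-transitivity.} By Wielandt's theorem, dihedral groups are B-groups: every primitive group containing a regular dihedral subgroup is $2$-transitive. We would verify this via the Schur ring over $H$ determined by the orbits of $G_\alpha$. The orbits of $\la a\ra$ and the involutions $a^ib$ let one merge all basic sets into one.

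\emph{Step 4: run through the $2$-transitive groups.} We use the classification of finite $2$-transitive groups.
\begin{itemize}
\item \emph{Affine groups.} The degree $2^d=|H|$ forces $H$ to be a $2$-group. A regular $\D_{2^d}$ inside $\AGL(d,2)$ must have $a$ acting with a Jordan block of size at most $d$ while satisfying $a^{2^{d-1}}\ne1$. This forces $d\le4$, and a direct computation gives the affine rows of Table~\ref{d-group}, including the two cases $\AGammaL(2,4)$ and $\ASigmaL(2,4)$ missed in \cite{Li06}.
\item \emph{Almost simple groups.} We check each family in turn:
\begin{itemize}
\item $\A_m$ and $\Sy_m$: parity forces $4\mid m$ for $\A_m$.
\item $\PSL(d,q)$ with $d\ge3$: excluded, because a regular dihedral subgroup would give a cyclic subgroup of order $(q^d-1)/(2(q-1))$ with two orbits, contradicting Singer-cycle structure.
\item $\PSL(2,q)$ on $q+1$ points: $\D_{q+1}$ lies in the normalizer of a non-split torus, giving the congruence conditions listed in the table.
\item Sporadic groups $\M_{11},\M_{12},\M_{22},\M_{23},\M_{24}$, $\mathrm{HS}$, $\mathrm{Co}_3$: checked directly from character tables and subgroup lists.
\end{itemize}
\end{itemize}

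The main obstacle is Step 2, excluding the non-affine, non-almost-simple O'Nan--Scott types, together with the exhaustive checking in Step 4 (notably the degree-$16$ affine cases, which is exactly where the original classification erred).
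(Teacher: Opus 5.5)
The paper gives no proof of this proposition; it quotes \cite[Theorem 1.5]{Li06}, corrected by \cite[Theorem 3.3]{SLZ}. Your route is the natural one: reduce to the primitive case, apply Wielandt's theorem that dihedral groups are B-groups, then sift the finite $2$-transitive groups. As written, however, it has a genuine gap at the end of Step 2.

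The claim ``in both cases $G$ is primitive'' holds for type HA. There $N$ is regular abelian, any overgroup $M$ of $G_\alpha$ equals $(M\cap N)G_\alpha$ with $M\cap N\unlhd G$, and quasiprimitivity gives $M\in\{G_\alpha,G\}$. For type AS the claim is false in general. Every transitive action of a nonabelian simple group is quasiprimitive, and it is imprimitive whenever the point stabilizer is not maximal. Wielandt's theorem concerns primitive groups only, so Step 3 is unavailable until you prove that $G_\alpha$ is maximal in $G$. That reduction is the substantive content of the quasiprimitive statement beyond Wielandt's theorem and the list of $2$-transitive groups.

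To close the gap you would need to do the following:
\begin{enumerate}
\item Take a block system $\mathcal{B}$ with maximal blocks. $G$ acts on it faithfully (the kernel cannot contain the transitive socle) and primitively.
\item Observe that $H$ induces on $\mathcal{B}$ a transitive, in general non-regular, cyclic or dihedral group.
\item Determine $(G,G_B)$ from the factorization $G=HG_B$.
\item Rule out every $G_\alpha<G_B$ with $G_B=H_BG_\alpha$ and $H_B\cap G_\alpha=1$.
\end{enumerate}
This needs the factorization theory of almost simple groups, and your sketch does not touch it.

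Several other steps also need repair.
\begin{itemize}
\item In Step 1, $G=HN$ does not follow from regularity of $H$. For $G=\Sy_{4m}$ ($m\ge2$) and $N=\A_{4m}$, a regular $\D_{4m}$ already lies in $\A_{4m}$, so $HN=N\neq G$.
\item In Step 2, the exclusion of types HS, HC, SD, CD, TW and PA is asserted rather than argued.
\item In Step 4, the $\PSL(d,q)$ argument gives no contradiction as stated. An index-$2$ subgroup of a Singer cycle does have exactly two orbits, so you must use the fixed-point-free involutions inverting $a$.
\item In the affine case the condition should read $a^{2^{d-2}}\neq1$. The bound $d\le4$ follows because a $2$-element of $\AGL(d,2)\le\GL(d+1,2)$ has order at most $2^{\lceil\log_2(d+1)\rceil}$.
\item Your list of almost simple $2$-transitive groups omits even-degree families that must also be excluded: $\mathrm{Sp}(2d,2)$ ($d\ge3$) on $2^{d-1}(2^d\pm1)$ points, $\mathrm{PSU}(3,q)$ ($q$ odd) on $q^3+1$ points, and ${}^2G_2(q)$ on $q^3+1$ points, including $\mathrm{P\Gamma L}(2,8)$ on $28$ points.
\end{itemize}
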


Finally, we introduce some concepts related to $X$-vertex transitive graphs $\Ga$, where $X\leq \Aut(\Ga)$.  Let $\BB=\{B_1,\ldots,B_n\}$ be a $X$-invariant partition of $V(\Ga)$, namely, for each $x\in X$ and $B_i\in\BB$, either $B_i^x=B_i$ or $B_i^x\cap B_i=\emptyset$. Then the {\em quotient graph} $\Ga_\BB$ of $\Ga$ induced on $\BB$ is the graph with vertex set $\BB$, and $B_i$ is adjacent to $B_j$ in $\Ga_\BB$ if there exist $u\in B_i$ and $v\in B_j$ such that $\{u,v\}$ is an edge of $\Ga$. The graph $\Ga$ is said to be an {\em $r$-cover} of $\Ga_\BB$ if for each edge $\{B_i,B_j\}$ of $\Ga_\BB$ and each vertex $u\in B_i$, we have $|\Ga(u)\cap B_j|=r$. In particular, if $r=1$, then $\Ga$ is called a {\em cover} of $\Ga_\BB$. Moreover, if $\BB$ is the set of orbits of an intransitive normal subgroup $N$ of $X$, then $\Ga$ is called a {\em normal $r$-cover} of $\Ga_\BB$, and we write $\Ga_\BB=\Ga_N$.

\section{Proof of Theorem~\ref{Thm:dihedrant}}\label{sec:proof}

For each finite group $G$ of order $n$, we know that $\Cay(G,G\setminus\{1\})\cong\K_n$ is an inner-automorphic Cayley graph on $G$. In~\cite[Lemma 3.3]{HFYK}, it was shown that the cycle $\C_n$ is an inner-automorphic Cayley graph $\Cay(G,S)$ on $G$ if and only if either $G=\la a\ra\cong\ZZ_n$ and $S=\{a,a^{-1}\}$, or $G=\la a,b\ra\cong\D_4$ and $S=\{a,b\}$.

In what follows, we examine whether the three graphs $\K_{n,n}$, $\K_{n,n} - n\K_2$, and $\K_{m[t]}$ are inner-automorphic Cayley graphs. We first provide a necessary and sufficient condition for a Cayley graph to be isomorphic to $\K_{n,n}$ or $\K_{m[t]}$.

\begin{lemma} \label{arctran:kmb}
Let $\Ga=\Cay(G,S)$ be a connected Cayley graph. Then the following statements hold.
\begin{enumerate}[\rm (i)]
  \item $\Ga\cong\K_{n,n}$ if and only if $S=G\setminus H$ and $H$ is a subgroup of $G$ of index $2$. In particular, $\Ga$ is an inner-automorphic Cayley graph.
  \item $\Ga\cong\K_{m[t]}$ with $m\geq3$ and $t\geq2$ if and only if $S=G\setminus H$, and $H$ is a subgroup of order $t$, and $m=|G:H|$.
\end{enumerate}
\end{lemma}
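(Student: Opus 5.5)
The plan is to prove both parts by identifying the non-neighbours of the identity vertex and showing they form a subgroup. Throughout, $\Ga=\Cay(G,S)$ and $R(G)\le\Aut(\Ga)$.

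\emph{Sufficiency.} For (i), suppose $S=G\setminus H$ with $|G:H|=2$. Then $y\sim x$ if and only if $yx^{-1}\notin H$, that is, if and only if $x$ and $y$ lie in different right cosets of $H$. So $\Ga$ is complete bipartite with parts $H$ and $G\setminus H$, each of size $|G|/2=n$. For (ii), suppose $S=G\setminus H$ with $|H|=t$. The same reasoning shows that $x\sim y$ exactly when $Hx\ne Hy$, so $\Ga\cong\K_{m[t]}$ with $m=|G:H|$.

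\emph{Necessity.} Suppose $\Ga\cong\K_{n,n}$ or $\K_{m[t]}$, and let $H=\{1\}\cup(G\setminus S)$. In both graphs, non-adjacency together with equality is an equivalence relation whose classes are the parts; in $\K_{n,n}$ the parts are the two biparts. Hence $H$ is exactly the part containing the vertex $1$. The key step is to show that $H$ is a subgroup. Since $R(G)\le\Aut(\Ga)$ and automorphisms preserve the partition into parts, right translation $R(h)$ with $h\in H$ maps $1$ to $h$, which lies in the same part. So $R(h)$ fixes the part $H$ setwise, giving $Hh=H$ for all $h\in H$. Thus $H$ is closed under multiplication, and being finite it is a subgroup. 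Its order is the part size, so $|G:H|=2$ in case (i), and $|H|=t$ with $m=|G:H|$ in case (ii). Connectivity is automatic here, since $m\ge2$.

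\emph{Inner-automorphic claim in (i).} A subgroup of index $2$ is normal, so $S=G\setminus H$ is a union of cosets of $H$ closed under conjugation, hence a union of conjugacy classes. By Proposition~\ref{dual}, $\Ga$ is inner-automorphic.

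The only step needing care is the subgroup argument. It relies on the part containing $1$ being preserved by right translations by its own elements, which follows from the uniqueness of the part partition in $\K_{n,n}$ and $\K_{m[t]}$.
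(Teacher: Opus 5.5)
Your proof is correct and follows essentially the same route as the paper. You identify $H=G\setminus S=\{1\}\cup\Ga_2(1)$ as the part containing $1$, show it is a subgroup, read off its order, and use normality of an index-$2$ subgroup for the inner-automorphic claim. The differences are cosmetic. You get closure from $R(h)$ preserving the canonical part partition, whereas the paper notes directly that $xy^{-1}\in S$ for $x,y\in H$ would create an edge inside the independent set $H$. Your coset description of adjacency is a slightly cleaner form of the paper's sufficiency argument.
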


\proof Suppose that $\Ga\cong \K_{n,n}$ or $\K_{m[t]}$. Then $V(\Ga)=\{1\}\cup S\cup \Ga_2(1)$. Let $H=\{1\}\cup\Ga_2(1)$. Then $S=G\setminus H$ and the subgraph $[H]$ is an edgeless graph. Take two arbitrary elements $x,y\in H$. Clearly, $xy^{-1}\in S\cup H$, and since $S=S^{-1}$, we have $y^{-1}\in H$. If $xy^{-1}\in S$, then $x\neq y$ and $\{x,y\}$ is an edge of the subgraph $[H]$, which is impossible. Thus, $xy^{-1}\in H$, and so $H$ is a subgroup of $G$. If $\Ga\cong\K_{n,n}$, then $|\Ga_2(1)|=n-1$, and so $|H|=n$, as the necessity of part (i). If $\Ga\cong\K_{m[t]}$, then $|\Ga_2(1)|=t-1$, and so $|H|=t\geq2$ and $m=|G:H|\geq3$. This proves the necessity of part (ii).

Conversely, suppose first that $S=G\setminus H$ and the other conditions of $H$ holds. Let $m=|G:H|$. Then we may set $G=H\cup Hx_1\cup\cdots\cup Hx_{m-1}$. Note from $S=G\setminus H$ that $[H]$ is an edgeless graph. Since $R(G)\leq\Aut(\Ga)$, we drive that $R(x_i)$ maps the subgraph $[H]$ to $[Hx_i]$ for every $i\in\{1,\ldots,m-1\}$, which implies that $[H]\cong [Hx_i]$. Moreover, for each $h_1\in H$ and $h_2x_i\in Hx_i$ with $1\leq i\leq m-1$, we have $h_2x_ih_1^{-1}\notin H$, and hence $h_2x_ih_1^{-1}\in S$. It follows that $h_1$ is adjacent to $h_2x_i$, and so the subgraph $[H,Hx_i]\cong\K_{t,t}$. Therefore, if $m=2$, then $\Ga\cong\K_{n,n}$, and if $m\geq3$, then $\Ga\cong\K_{m[t]}$. This proves the sufficiency of part (i) and (ii).

Accordingly, when $\Ga\cong\K_{n,n}$, the condition $|G:H|=2$ directly implies that $\Ga$ is inner-automorphic, completing the proof.
\qed

From Lemma~\ref{arctran:kmb}, we have the following corollary.

\begin{corollary} \label{dual:kmb}
Let $\Ga=\Cay(G,S)$ be a connected Cayley graph. Then $\Ga\cong\K_{m[t]}$, with $m\geq3$ and $t\geq2$, is an inner-automorphic Cayley graph if and only if $S=G\setminus H$, and $H$ is a normal subgroup of $G$ order $t\geq2$ and $m=|G:H|\geq3$.
\end{corollary}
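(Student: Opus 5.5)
The plan is to combine Lemma~\ref{arctran:kmb}(ii) with the criterion for being inner-automorphic, namely that $S$ is a union of conjugacy classes (equivalently, $\Inn(G)\leq\Aut(G,S)$ by Proposition~\ref{dual}). First, Lemma~\ref{arctran:kmb}(ii) shows that $\Ga\cong\K_{m[t]}$ with $m\geq3$ and $t\geq2$ holds exactly when $S=G\setminus H$ for a subgroup $H$ of order $t$ with $m=|G:H|$. So it only remains to show that, for $S=G\setminus H$ with $H$ a subgroup, $S$ is closed under conjugation if and only if $H$ is normal in $G$.

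For sufficiency, suppose $H\trianglelefteq G$. Then $H$ is a union of conjugacy classes. Since the conjugacy classes partition $G$, the complement $S=G\setminus H$ is also a union of conjugacy classes, so $\Ga$ is inner-automorphic by definition. For necessity, suppose $\Ga$ is inner-automorphic and $\Ga\cong\K_{m[t]}$. Lemma~\ref{arctran:kmb}(ii) gives $S=G\setminus H$ with $H$ a subgroup of order $t$. Because $S$ is invariant under every inner automorphism, so is its complement $G\setminus S=H$. Hence $H^g=H$ for all $g\in G$, i.e.\ $H$ is normal.

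The only mildly delicate point is the identification $H=\{1\}\cup\Ga_2(1)$, which is a set of vertices rather than an intrinsically defined subgroup. I would bypass this by working directly with the complement of $S$, which is automatically conjugation-invariant whenever $S$ is. No real obstacle is expected; the corollary is essentially immediate from Lemma~\ref{arctran:kmb} together with Proposition~\ref{dual}.
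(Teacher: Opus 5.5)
Your proposal is correct and follows essentially the same route as the paper. You combine Lemma~\ref{arctran:kmb}(ii) with the observation that $S=G\setminus H$ is a union of conjugacy classes exactly when $H$ is, which gives normality of $H$ in both directions. Your complement argument just makes explicit the step the paper attributes to Proposition~\ref{dual}.
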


\proof For the sufficiency, Lemma~\ref{arctran:kmb}(ii) implies that $\Ga\cong\K_{m[t]}$. Moreover, since $H\unlhd G$, it is clear that $\Ga$ is inner-automorphic.

For the necessity, Lemma~\ref{arctran:kmb}(ii) implies that it suffices to show $H\unlhd G$. This normality follows from the inner-automorphy of $\Ga$ and Proposition~\ref{dual}, which states that $S=G\setminus H$ must be the union of conjugacy classes of some elements in $G$. \qed

Throughout the rest of this section, we always suppose that
\begin{equation*}
G=\la a,b\mid a^n=b^2=1,a^b=a^{-1}\ra\cong\D_{2n} \text{~with~} n\geq2.
\end{equation*}
Utilizing Lemma~\ref{arctran:kmb}, we now establish a criterion for the graph $\K_{n,n}-n\K_2$, with $n\geq4$,
to be equivalent to an inner-automorphic Cayley graph over a dihedral group.

\begin{lemma} \label{dual:knn-nk2}
Let $\Ga=\Cay(G,S)\cong\K_{n,n}-n\K_2$, where $n\geq4$. Then $\Ga$ is an inner-automorphic Cayley graph on $G$ if and only if $n=2k$ with $k\geq3$ odd, and either $S=\{a^{2i}b, a^{2j+1}\mid 1\leq i,j\leq k,2j+1\neq k\}$ or $S=\{a^{2i+1}b, a^{2j+1}\mid 1\leq i,j\leq k,2j+1\neq k\}$.
\end{lemma}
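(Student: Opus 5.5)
The plan is to reduce to the structure of $\K_{n,n}-n\K_2$ as a Cayley graph and then use the conjugacy class description of Proposition~\ref{conju-D2n}. The graph $\K_{n,n}-n\K_2$ on $2n$ vertices is bipartite. Its bipartition is preserved by $R(G)$, and the part containing $1$ is a subgroup $H$ of index $2$: by the argument of Lemma~\ref{arctran:kmb}, $H=\{1\}\cup\Ga_2(1)$. Moreover, each vertex $x$ has a unique antipodal vertex at distance $3$, namely its non-neighbour in the other part. Write $z$ for the antipode of $1$. Then $\Ga_3(1)=\{z\}$, $S=G\setminus(H\cup\{z\})$, and $z\in G\setminus H$. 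Conversely, for any index-$2$ subgroup $H$ and any $z\notin H$ with $z=z^{-1}$, the set $S=(G\setminus H)\setminus\{z\}$ gives $\K_{n,n}-n\K_2$ provided the "antipode'' relation $x\mapsto zx$ is a perfect matching. This requires $xy^{-1}=z \iff yx^{-1}=z$, which follows from $z^2=1$. So the Cayley graphs isomorphic to $\K_{n,n}-n\K_2$ are exactly $\Cay(G,(G\setminus H)\setminus\{z\})$ with $|G:H|=2$ and $z\in G\setminus H$ an involution. I would check the necessity of $z^2=1$ directly from $S=S^{-1}$, using that $G\setminus H$ is inverse-closed.

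Next, inner-automorphic means $S$ is a union of conjugacy classes. Since $G\setminus H$ is a union of classes ($H\unlhd G$), this holds if and only if $\{z\}$ is a conjugacy class, i.e.\ $z\in Z(G)\setminus H$. For $\D_{2n}$ with $n\ge4$, Proposition~\ref{conju-D2n} gives $Z(G)=1$ when $n$ is odd, which rules that case out. When $n=2k$, we have $Z(G)=\{1,a^k\}$, so we need $z=a^k\notin H$.

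Then I would list the index-$2$ subgroups of $\D_{2n}$ for $n=2k$: $\la a\ra$, $\la a^2,b\ra$ and $\la a^2,ab\ra$. Since $a^k\in\la a\ra$, the first is excluded. For the other two, $a^k\notin H$ if and only if $k$ is odd. With $k$ odd, $H=\la a^2,b\ra$ gives $G\setminus H=\{a^{2i+1}b,a^{2j+1}\}$, and $H=\la a^2,ab\ra$ gives $\{a^{2i}b,a^{2j+1}\}$. Removing $a^k$ yields exactly the two sets in the statement; note $2j+1=k$ is possible precisely because $k$ is odd. The condition $k\ge3$ comes from $n\ge4$ together with $k$ odd (which excludes $k=2$). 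Connectivity is automatic for $n\ge4$.

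The main obstacle is the first step: rigorously showing that any Cayley realisation of $\K_{n,n}-n\K_2$ has $S$ of the stated form. In particular, the bipartition class of $1$ must be a subgroup and the antipodal map must be right multiplication by a fixed $z$. I would handle it as in Lemma~\ref{arctran:kmb}. Take $x,y\in H$. Then $xy^{-1}$ is not in $S$, because $[H]$ is edgeless. It also cannot be $z$, since $d(1,xy^{-1})=d(y,x)$ is even while $d(1,z)=3$. Hence $xy^{-1}\in H$, so $H$ is a subgroup and $G\setminus H=S\cup\{z\}$.
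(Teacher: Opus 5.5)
Your proof is correct and follows the paper's route. Both write $S=(G\setminus H)\setminus\{z\}$ with $|G:H|=2$, use the class-union condition to force $z$ central (so $n=2k$ and $z=a^k$), and then check the three index-$2$ subgroups of $\D_{2n}$ to get $k$ odd and the two listed sets. Your version is also tighter in two places: the paper only asserts the first step (``we may let $S=T\setminus\{x\}$''), and your distance-parity argument that $\{1\}\cup\Ga_2(1)$ is a subgroup with $\Ga_3(1)=\{z\}$ supplies the missing justification, while your remark that $\{z\}=(G\setminus H)\setminus S$ is a union of classes is a cleaner substitute for the paper's cardinality count.
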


\proof Sufficiency follows readily. To prove the necessity, assume that $\Ga\cong\K_{n,n}-n\K_2$ is an inner-automorphic Cayley graph on a dihedral group $G$. Note that $\Ga$ is a subgraph of $\K_{n,n}$ minus a matching. Write $\Sig=\Cay(G,T)\cong\K_{n,n}$.
Then we may let $\Ga=\Cay(G,S)$ with $S=T\setminus\{x\}$ for some $x\in T$. Clearly, $|S|=|T|-1$. Since $\Ga$ is inner-automorphic, Proposition~\ref{dual} means that $S$ is the union of conjugacy classes of some elements in $G$, which implies that $S^G=S$ and $x^G\cap S=\emptyset$. Noting from Lemma~\ref{arctran:kmb}(i) that $\Sig=\Cay(G,T)\cong\K_{n,n}$ is inner-automorphism, then it follows that $T^G=T$. Thus, we have that
\[
|T|-1=|S|=|S^G|=|(T\setminus\{x\})^G|=|T^G|-|x^G|=|T|-|x^G|,
\]
and so $|x^G|=1$. This implies that $x$ lies in the center of $G$. By Proposition~\ref{conju-D2n}, we see that $n=2k$ is even and $x=a^k$, where $k\geq 2$ as $n\geq 4$. Again by Lemma~\ref{arctran:kmb}(i), we obtain that $T=G\setminus H$ with $H$ a subgroup of $G$ with index $2$. Clearly, $x=a^k\notin H$ as $x\in T$. Notice that all subgroups of $G$ with index $2$ are: $\la a\ra$, $\la a^2,b\ra$ and $\la a^2,ab\ra$. Then $H=\la a^2,b\ra$ or $\la a^2,ab\ra$, and $k\geq3$ is odd. Hence we derive from $S=T\setminus\{x\}$ and $T=G\setminus H$ that
\[
S=\{a^{2i}b, a^{2j+1}\mid 1\leq i,j\leq k,2j+1\neq k\}
\]
or
\[
S=\{a^{2i+1}b, a^{2j+1}\mid 1\leq i,j\leq k,2j+1\neq k\},\]
completing the proof.
\qed

We now characterize connected arc-transitive inner-automorphic Cayley graphs over dihedral groups.

\begin{lemma} \label{arctran:dualredu}
Let $\Ga=\Cay(G,S)$ be a connected arc-transitive inner-automorphic Cayley graph on $G\cong\D_{2n}$ with $n\geq2$.
Then one of the following is exactly true:
\begin{enumerate}[\rm (i)]
    \item $\Ga\cong\K_{n,n}$, and either $S=\{a^ib\mid 1\leq i\leq n\}$, or $n=2k$ is even and $S=\{a^{2i}b, a^{2i+1}\mid 1\leq i\leq k\}$ or $S=\{a^{2i+1}b, a^{2i+1}\mid 1\leq i\leq k\}$;
  \item $\Ga\cong\K_{n,n}-n\K_2$ with $n=2k$ even and $k\geq3$ odd, and either $S=\{a^{2i}b, a^{2j+1}\mid 1\leq i,j\leq k,2j+1\neq k\}$ or $S=\{a^{2i+1}b, a^{2j+1}\mid 1\leq i,j\leq k,2j+1\neq k\}$;
  \item $\Ga\cong\K_{2n}$ and $S=G\setminus\{1\}$;
  \item $\Ga\cong\K_{m[t]}$, $S=G\setminus H$ and $H$ is the unique subgroup of $\la a\ra$ of order $t$, where $m\geq 3$, $t\geq 2$ and $mt=2n$;
  \item $(a^\pi b)^G\subseteq S\subseteq(a^\pi b)^G\cup\la a^2\ra a$ and $0<|\la a^2\ra a\cap S|\leq k-2$, where $n=2k$ and $\pi\in\{0,1\}$.
\end{enumerate}
\end{lemma}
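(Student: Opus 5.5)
Set $A=\Aut(\Ga)$, which contains $R(G)L(G)=R(G)\Inn(G)$. By Proposition~\ref{normal}, $\Ga$ is not a normal Cayley graph, since $G$ is nonabelian for $n\geq3$; the case $n=2$ is checked by hand. I split according to whether $A$ is quasiprimitive on $V(\Ga)$.

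\emph{Quasiprimitive case.} If $A$ is quasiprimitive, then Proposition~\ref{Qd-group} makes $A$ $2$-transitive, so $\Ga\cong\K_{2n}$ and $S=G\setminus\{1\}$. This is (iii).

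\emph{Non-quasiprimitive case.} Take a minimal intransitive nontrivial normal subgroup, or better a maximal intransitive normal subgroup $N$ of $A$, and form the normal quotient $\Ga_N$. Arc-transitivity of $\Ga$ makes $\Ga_N$ arc-transitive with $A/N$ quasiprimitive (or $\Ga_N\cong\K_2$). The orbits of $N$ form a block system for $R(G)$, so they are the right cosets $Hg$ of a subgroup $H\le G$. Since $L(G)\le A$ also preserves the blocks, $H$ is normal in $G$. The group $G/H$ acts regularly on $\Ga_N$ and is cyclic or dihedral. I would then treat two subcases.

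(a) $\Ga_N\cong\K_2$. Then $\Ga$ is bipartite with parts $H$ and $Hx$, where $|G:H|=2$. So $H\in\{\la a\ra,\la a^2,b\ra,\la a^2,ab\ra\}$ and $S\subseteq G\setminus H$.
\begin{itemize}
\item If $H=\la a\ra$, then $S\subseteq\{a^ib\}$, which is a single conjugacy class when $n$ is odd; when $n$ is even it is a union of the two classes $b^G$ and $(ab)^G$. A single class gives a disconnected graph, so $S$ is the whole coset and $\Ga\cong\K_{n,n}$.
\item If $H=\la a^2,b\rangle$ (or $\la a^2,ab\ra$), then $S\subseteq (a^\pi b)^G\cup\la a^2\ra a$. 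Connectivity forces $S$ to meet both pieces, because $\la a^2\ra a$ alone or $(a^\pi b)^G$ alone generates a proper subgroup. Since $S$ is a union of classes, it contains the whole of $(a^\pi b)^G$.
\end{itemize}
Now count $s=|\la a^2\ra a\cap S|$. If $s=k$ we get $\K_{n,n}$ by Lemma~\ref{arctran:kmb}(i). If $s=k-1$, then $S$ misses a single central element $a^k$ (the argument of Lemma~\ref{dual:knn-nk2}), which gives (ii). Otherwise $0<s\le k-2$, which gives (v).

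(b) $\Ga_N$ is nontrivial with $A/N$ quasiprimitive, containing the regular group $G/H$. When $G/H$ is dihedral, Proposition~\ref{Qd-group} gives that $\Ga_N$ is complete. When $G/H$ is cyclic (that is, $|G/H|\le2$ with $H$ nonabelian), this is absorbed into (a). So $\Ga_N\cong\K_m$ and $\Ga$ is a normal $r$-cover of $\K_m$.

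The goal in (b) is to show that $r=|H|$, i.e. $\Ga\cong\K_{m[t]}$, whenever $m\ge3$. Then Corollary~\ref{dual:kmb} gives $S=G\setminus H$ with $H\unlhd G$, and $H$ must lie in $\la a\ra$ for $G/H$ to be dihedral of order at least $3$; this is (iv).

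\emph{Main obstacle.} Excluding proper covers of $\K_m$ with $r<|H|$ is where I expect the difficulty. My first attempt uses the inner-automorphic condition. $S$ is a union of classes, and every coset $Hg$ with $g$ a reflection meets $S$ in a union of the large reflection classes. Using $S^G=S$ together with the transitivity of $A_1$ on $S$, I would show that $S\cap Hg$ is either empty or the whole of $Hg$. This comparison of class sizes is similar to the argument in Lemma~\ref{dual:knn-nk2}. If that fails, I would instead argue via the kernel $N$: show that $N_1$ acts trivially only when $\Ga$ is a cover, and rule out the cover case by noting that $N$ would then be semiregular with $N\cong H$. That makes $A=N.(A/N)$ small enough that $\Ga$ is normal, contradicting Proposition~\ref{normal}.
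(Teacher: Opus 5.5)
Your quasiprimitive case and your subcase (a) are sound. The gap is in subcase (b), and the target you set there is false. Maximal intransitive normal subgroups of $A$ need not be unique, so a bipartite $\Ga$ need not land in (a).

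Take $\Ga\cong\K_{n,n}-n\K_2$ from case (ii), with $n=2k$ and $k\ge3$ odd. Then $A\cong\Sy_n\times\ZZ_2$. Its centre $N=\la R(a^k)\ra$ swaps each vertex $x$ with its unique non-neighbour $xa^k$. $N$ is a maximal intransitive normal subgroup, because the only normal subgroups properly containing it are $\A_n\times\ZZ_2$ and $A$, and both are transitive. Here $H=\la a^k\ra$, $A/N\cong\Sy_n$ is $2$-transitive, $\Ga_N\cong\K_n$, and $\Ga$ is a normal cover with $r=1<2=|H|$.

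So both your goal ``$r=|H|$'' and your fallback ``rule out the cover case'' are contradicted by graphs in the lemma's own conclusion. The fallback's claim that a semiregular $N\cong H$ makes $A$ small enough for $\Ga$ to be normal is also false, as this example shows.

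Similarly, for the graphs of Theorem~\ref{Thm:dualCay}, the preimage of the centre of $A/K\cong\Sy_{2p}\times\ZZ_2$ is a maximal intransitive normal subgroup. Its orbits are the cosets of $H=\la a^p\ra$ and $\Ga_N\cong\K_{2p}$. But $S\cap Ha=\{a,a^{2p+1}\}$, so your intended claim that $S\cap Hg$ is either empty or all of $Hg$ fails there too.

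To repair the structure, first treat bipartite $\Ga$ by taking $N=A^+$; your analysis in (a) then applies verbatim and gives (i), (ii), (v). For non-bipartite $\Ga$ you still need a genuine argument.

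Your class-size comparison does help there. You have $|S\cap Hg|=r$ for all $g\notin H$, with $H\le\la a\ra$. This forces $S=G\setminus H$, except when $n$ is even, $H\not\le\la a^2\ra$, and $S$ contains exactly one reflection class. In that case only $r=|H|/2$ follows. This residual configuration is exactly the hard part: you must show $S\cap\la a^2\ra=\emptyset$, which makes $\Ga$ bipartite and so contradicts the assumption. Nothing in your sketch does this.

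The paper avoids the normal-quotient machinery and uses two common-neighbour counts.
\begin{itemize}
\item If $S$ contains both reflection classes, computing $\Ga(b)$ gives $\Ga_2(1)\subseteq\Ga(b)$. Arc-transitivity then makes every vertex of $S$ adjacent to all of $\Ga_2(1)$, so $\{1\}\cup\Ga_2(1)$ is a subgroup of $\la a\ra$.
\item If $S$ contains only one reflection class, compare $|\Ga(x)\cap S|$ for $x=ab$, for $x\in S\cap\la a^2\ra a$, and for $x\in S\cap\la a^2\ra$. This forces $|S\cap\la a^2\ra|\ge k+1$, which is impossible.
\end{itemize}
An argument of the second kind is the ingredient your route is missing.
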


\proof Since $\Ga$ is connected, we have $\la S\ra=G$, and so $a^ib\in S$ for some $1\leq i\leq n$.
By Proposition~\ref{dual}, we have $(a^ib)^G\subseteq S$, and so either $ab\in S$ or $b\in S$.
Let $A=\Aut(\Ga)$ and let $A_1$ be the stabilizer of $A$.
By Proposition~\ref{dual}, we obtain $\tau\in A_1$, where $\tau:x\mapsto x^{-1}$ for all $x\in G$.
Next, we process the proof by considering the two cases.

\medskip
\noindent{\bf Case 1:} Assume that $(ab)^G\cup b^G\subseteq S$.
\medskip

If $S=(ab)^G\cup b^G=G\setminus\la a\ra$, then $\Ga\cong\K_{n,n}$ by Lemma~\ref{arctran:kmb}, as part (i).
We now assume that $(ab)^G\cup b^G\subset S$.
By Propositions~\ref{dual} and~\ref{conju-D2n}, we may assume that
\begin{equation*}
S=(ab)^G\cup b^G\cup\Delta, \text{~where~} \Delta=\{a^{i_1},a^{-i_1},\ldots,a^{i_t}, a^{-i_t}\}\text{~and~}1\leq t\leq [n/2].
\end{equation*}
Then
\begin{equation*}
\Ga(b)=\{1\}\cup\{a,a^2,\ldots,a^{n-1}\}\cup\{a^{i_1}b,a^{-i_1}b,\ldots,a^{i_t}b, a^{-i_t}b\}.
\end{equation*}
This implies that
\begin{equation*}
\Ga(b)\cap S=\{a^{i_1},a^{-i_1},\ldots,a^{i_t}, a^{-i_t}\}\cup\{a^{i_1}b,a^{-i_1}b,\ldots,a^{i_t}b, a^{-i_t}b\}.
\end{equation*}
If $\Delta=\la a\ra\setminus\{1\}$, then $\Ga\cong\K_{2n}$, as part (iii).

Let $\Delta\subset\la a\ra\setminus\{1\}$. Then $\Ga$ is non-complete. Since $\Ga(b)=\{1\}\cup(\Ga(b)\cap S)\cup(\Ga(b)\cap\Ga_2(1))$, it follows that
\begin{equation*}
|S|=n+2t-\xi,~|\Ga(b)\cap S|=4t-2\xi,\text{~and~} |\Ga(b)\cap \Ga_2(1)|=n-2t-1+\xi,
\end{equation*}
where $\xi = 1$ if $n = 2k$ and $a^k \in S$; otherwise, $\xi = 0$. Since $\Ga$ is arc-transitive, we have $|\Ga(x)\cap \Ga_2(1)|=|\Ga(b)\cap \Ga_2(1)|$ for all $x\in S$. Notice that
\[
|G\setminus(\{1\}\cup S)|=2n-1-(n+2t-\xi)=n-2t-1+\xi.
\]
It follows that $\Ga_2(1)=\Ga(x)\cap \Ga_2(1)$ for each $x\in S$, that is to say, each vertex in $S$ is adjacent to all vertices of $\Ga_2(1)$. In particular, $G=\{1\}\cup S\cup\Ga_2(1)$, and the subgraph $[\Ga_2(1)]$ is an edgeless graph. Let $H=\{1\}\cup \Ga_2(1)$. Then $S=G\setminus H$ and $H\subset \la a\ra$. Since $\Ga$ is non-complete, we have $|H|\geq2$. For each $h,g\in H$, since $\tau\in A_1$, we have $h^{-1}\in H$ and $gh^{-1}\in S\cup H$. If $gh^{-1}\in S$, then $h\neq g$ and $\{h,g\}$ is an edge of $\Ga$, contradicting to the subgraph $[\Ga_2(1)]$ is an edgeless graph. Thus, $gh^{-1}\in H$. It follows that $H$ is a unique subgroup of $\la a\ra$ and $H\neq \la a\ra$, which implies that $H\unlhd G\cong\D_{2n}$ and $|G:H|\geq3$. By Corollary~\ref{dual:kmb}, $\Ga\cong\K_{m[t]}$, with $mt=2n$, $t=|H|$ and $m\geq3$. Thus, part (iv) of this lemma holds.

\medskip
\noindent{\bf Case 2:} Assume that $a^\pi b\in S$ and $a^{1-\pi} b\notin S$ with $\pi\in\{0,1\}$.
\medskip

Since $\Ga$ be a connected inner-automorphic Cayley graph, then Propositions~\ref{dual} and~\ref{conju-D2n} assert that $n=2k$ is even and $a^k$ is the involution of the center of $G$. Moreover, again by Proposition~\ref{dual}, we have
\begin{equation*}
(a^\pi b)^G\subseteq S,~S\cap (a^{1-\pi}b)^G=\emptyset \text{~and~} \Inn(G)\leq A_1.
\end{equation*}
Note that $\la (a^\pi b)^G\ra=\la a^2,a^\pi b\ra\cong\D_{2k}$. Since $\Ga$ is connected, we have $\la S\ra=G$, and hence $S\cap\la a^2\ra a\neq\emptyset$. Moreover, we have $S=S_\pi$, where $S_\pi=(a^\pi b)^G\cup \Delta$ and $\Delta\subseteq\la a\ra$.
By Proposition~\ref{Aut:D2n}, we have $\theta_a\in\Aut(G)$, where $\theta_\a$ is induced by the mapping $a\mapsto a, b\mapsto ab$. It follows that
\[
\Cay(G,S_0)^{\theta_a}=\Cay(G,b^G\cup \Delta)^{\theta_a}=\Cay(G,(ab)^G\cup \Delta)=\Cay(G,S_1).
\]
Therefore, we may assume that $\pi=1$ in the remainder of the proof.

In this paragraph, we prove by contradiction that $S\cap\la a^2\ra=\emptyset$. Suppose that $S\cap\la a^2\ra\neq\emptyset$.
By Propositions~\ref{dual} and~\ref{conju-D2n}, we may assume that $S=(ab)^G\cup\Delta_0\cup\Delta_1$ with
\begin{align*}
\Delta_0=\{a^{2j_1},a^{-2j_1},\ldots,a^{2j_s},a^{-2j_s}\}\text{~and~}\Delta_1=\{a^{2i_1+1},a^{-2i_1-1},\ldots,a^{2i_\ell+1},a^{-2i_\ell-1}\},
\end{align*}
where $s$ and $\ell$ are two integers such that $1\leq s\leq \lceil (k-1)/2\rceil$ and $1\leq \ell\leq \lceil k/2\rceil$. For each element $g\in G$ and a subset $\Delta$ of $G$, we define $\Delta g=\{xg\mid x\in\Delta\}$. Moreover, a direct computation may easily verify that
\begin{align*}
\Ga(ab)=\la a^2\ra\cup\Delta_0ab\cup\Delta_1ab \text{~and~}
\Ga(a^{2j_1})=(ab)^G\cup\Delta_0a^{2j_1}\cup\Delta_1a^{2j_1};
\end{align*}
and for each $1\leq m\leq \ell$, we have $\Ga(a^{2i_m+1})=b^G\cup\Delta_0a^{2i_m+1}\cup\Delta_1a^{2i_m+1}$, where
\begin{align*}
\Delta_0ab\subseteq (ab)^G,
\Delta_1ab\subseteq b^G,
\Delta_0a^{2i_m+1}\subseteq\la a^2\ra a,
\Delta_1a^{2i_m+1}\subseteq\la a^2\ra,
\Delta_0a^{2j_1}\subseteq \la a^2\ra,
\Delta_1a^{2j_1}\subseteq \la a^2\ra a.
\end{align*}
Then we deduce that
\begin{align*}
&\Ga(ab)\cap S=(\la a^2\ra\cap \Delta_0)\cup(\Delta_0ab\cap (ab)^G)=\Delta_0\cup \Delta_0ab,  \\
&\Ga(a^{2i_m+1})\cap S=(\Delta_0a^{2i_m+1}\cap \Delta_1)\cup(\Delta_1a^{2i_m+1}\cap \Delta_0)\subseteq \Delta_0a^{2i_m+1}\cup  \Delta_0,\\
&\Ga(a^{2j_1})\cap S=(ab)^G\cup(\Delta_0a^{2j_1}\cap \Delta_0)\cup(\Delta_1a^{2j_1}\cap \Delta_1).
\end{align*}
Since $\Ga$ is arc-transitive, we have $|\Ga(ab)\cap S|=|\Ga(a^{2i_m+1})\cap S|$, and hence $\Delta_0a^{2i_m+1}\cap \Delta_1=\Delta_0a^{2i_m+1}$ and $\Delta_1a^{2i_m+1}\cap \Delta_0=\Delta_0$. It follows that $\Delta_0a^{2i_m+1}\subseteq \Delta_1$ and $\Delta_0\subseteq \Delta_1a^{2i_m+1}$, and so $\Delta_0\subseteq \Delta_1a^{2i_m+1}\cap \Delta_1a^{-2i_m-1}$. Since $a^{\pm 2i_m\pm1}$ is adjacent to all vertices in $\Delta_1a^{\pm2i_m\pm1}$ for every $1\leq m\leq \ell$, we conclude that each vertex in $\Delta_0$ is adjacent to each vertex in $\Delta_1$. Therefore, $\Delta_1\cup (ab)^G\subseteq \Ga(a^{2j_1})\cap S$, and then $|\Ga(a^{2j_1})\cap S|\geq k+2\ell$. Since $1\in \Delta_1a^{2i_m+1}\setminus \Delta_0$, it follows that $2\ell\geq 2s+1$. Again by the arc-transitivity of $\Ga$, we have $|\Ga(ab)\cap S|=|\Ga(a^{2j_1})\cap S|$, which means that $4s\geq k+2\ell\geq k+2s+1$, namely, $s\geq (k+1)/2$, leading to a contradiction. Therefore, $S\cap\la a^2\ra=\emptyset$.

The claim as above means that $S\subseteq (ab)^G\cup\la a^2\ra a$, and hence $|S|\leq 2k$. If $|S|=2k=n$, then $S=(ab)^G\cup\la a^2\ra a=G\setminus H$ with $H=\la a^2, ab\ra$, and so $\Ga\cong\K_{n,n}$ by Lemma~\ref{arctran:kmb}, as part (i). If $|S|=2k-1$, then since $(ab)^G\subseteq S$, there is a unique vertex $a^{r}\in \la a^2\ra a$ such that $a^r\notin S$. Thus, $a^{-r}\notin S$, which means that $a^r$ is an involution. Then $a^r=a^k\in Z(G)$ and $k\geq 3$ is odd. It follows that $S=(ab)^G\cup\la a^2\ra a\setminus\{a^k\}$. Thus, we derive from Lemma~\ref{dual:knn-nk2} that $\Ga\cong\K_{n,n}-n\K_2$, as part (ii). If $|S|\leq 2k-2$, then it follows from $S\cap\la a^2\ra a\neq\emptyset$ and $(ab)^G\subseteq S$ that $0<|S\cap \la a^2\ra a|\leq k-2$, as part (v).
This completes the proof.
\qed

The proof of Lemma~\ref{arctran:dualredu} yields the following corollary, which will be used repeatedly in the subsequent analysis.

\begin{corollary} \label{coro:isomorphic}
If $n$ is even and $(a^\pi b)^G\subseteq S_\pi\subseteq (a^\pi b)^G\cup\la a^2\ra a$ with $\pi\in\{0,1\}$,
then $\Cay(G,S_0)\cong\Cay(G,S_1)$.
\end{corollary}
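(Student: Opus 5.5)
The plan is to exhibit an explicit automorphism of $G$ carrying one connection set onto the other; an automorphism of the group induces a graph isomorphism between the corresponding Cayley graphs. Concretely, we would use the map $\theta_a\in\Aut(G)$ from Proposition~\ref{Aut:D2n}, determined by $a\mapsto a$ and $b\mapsto ab$, exactly as in Case~2 of the proof of Lemma~\ref{arctran:dualredu}. The statement should be read as follows: write $S_\pi=(a^\pi b)^G\cup\Delta$ with $\Delta=S_\pi\cap\la a^2\ra a$, and take $S_{1-\pi}=(a^{1-\pi}b)^G\cup\Delta$ with the same $\Delta$. This is the only reading under which the claim is meaningful and true.

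The key steps are as follows. First, $\theta_a$ fixes $\la a\ra$ pointwise, so $\Delta^{\theta_a}=\Delta$. Second, $(a^ib)^{\theta_a}=a^{i+1}b$, so by Proposition~\ref{conju-D2n} with $n=2k$, $\theta_a$ maps $b^G=\{a^{2i}b\}$ onto $(ab)^G=\{a^{2i+1}b\}$. Hence $S_0^{\theta_a}=S_1$. Third, for any $\sigma\in\Aut(G)$, the map $x\mapsto x^\sigma$ sends $\Cay(G,T)$ isomorphically onto $\Cay(G,T^\sigma)$: indeed $yx^{-1}\in T$ if and only if $y^\sigma(x^\sigma)^{-1}=(yx^{-1})^\sigma\in T^\sigma$. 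Applying this with $\sigma=\theta_a$ and $T=S_0$ gives $\Cay(G,S_0)\cong\Cay(G,S_1)$. Symmetry of isomorphism then covers the direction from $\pi=1$ to $\pi=0$.

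No step here is a real obstacle, since everything is a direct computation. The only care needed is in fixing the correct interpretation of the statement, namely that $S_0$ and $S_1$ share the same cyclic part $\Delta$, and in checking that $\theta_a$ is a genuine automorphism; the latter is granted by Proposition~\ref{Aut:D2n}.
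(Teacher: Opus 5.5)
Your proposal is correct and matches the paper's argument: the corollary comes from Case~2 of the proof of Lemma~\ref{arctran:dualredu}, which applies $\theta_a\colon a\mapsto a,\ b\mapsto ab$ to send $b^G\cup\Delta$ onto $(ab)^G\cup\Delta$, with $\Delta\subseteq\la a\ra$ fixed pointwise. Your reading that $S_0$ and $S_1$ share the same cyclic part $\Delta$ is the paper's intended meaning, and your check that a group automorphism induces a Cayley graph isomorphism supplies the one step the paper leaves implicit.
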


It therefore remains to examine the properties of graphs satisfying part (v) of Lemma~\ref{arctran:dualredu}, which will complete the proof of Theorem~\ref{Thm:dihedrant}.

\begin{lemma} \label{pro:caseV}
Let $n=2k$ and let $(a^\pi b)^G\subseteq S\subseteq(a^\pi b)^G\cup\la a^2\ra a$ with $0<|\la a^2\ra a\cap S|\leq k-2$,
where $\pi\in\{0,1\}$. Then $\Ga=\Cay(G,S)$ has the following properties:
\begin{enumerate}[\rm (i)]
  \item $\Ga$ has girth $4$ and diameter $3$;
  \item $\Ga_2(1)=((a^{1-\pi}b)^G\cup\la a^2\ra)\setminus\{1\}$ and $\Ga_3(1)=\la a^2\ra a\setminus S$;
  \item $\Ga$ is a bipartite graph with biparts $\la a^2,b\ra$ and $\la a^2,b\ra a$.
\end{enumerate}
\end{lemma}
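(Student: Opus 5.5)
By Corollary~\ref{coro:isomorphic} (equivalently, applying the automorphism $\theta_a$, which fixes $\la a\ra$ pointwise and swaps the two classes of reflections), I will assume $\pi=1$. Then $S=(ab)^G\cup\Delta$ with $\emptyset\neq\Delta\subseteq\la a^2\ra a$, $\Delta=\Delta^{-1}$ and $|\Delta|\leq k-2$.

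I will prove (iii) first, since it organizes everything else. Put $H=\la a^2,b\ra$, a subgroup of index $2$ in $G$. Every element of $S$ lies in $G\setminus H=\la a^2\ra a\cup(ab)^G$. Hence for any $x$ and any $s\in S$, the vertices $x$ and $sx$ lie in different cosets of $H$. So $\Ga$ is bipartite with parts $H$ and $Ha$, and it has no odd cycles.

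For (ii), I compute neighbourhoods explicitly. For $a^{2i+1}b\in(ab)^G$ we have
\[
\Ga(a^{2i+1}b)=\{a^{2j+1}b\cdot a^{2i+1}b\}\cup\Delta a^{2i+1}b .
\]
The first set is $\{a^{2(j-i)}\}$, which is all of $\la a^2\ra$. The second set lies in $\la a^2\ra b=b^G$. For $a^r\in\Delta$ we have $\Ga(a^r)=(ab)^Ga^r\cup\Delta a^r$, where $(ab)^Ga^r=b^G$ because $r$ is odd, and $\Delta a^r\subseteq\la a^2\ra$. So $\Ga_2(1)\subseteq(b^G\cup\la a^2\ra)\setminus\{1\}$. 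Conversely, all of $\la a^2\ra\setminus\{1\}$ and all of $b^G$ are reached, and none of these lie in $S$. Therefore $\Ga_2(1)=(b^G\cup\la a^2\ra)\setminus\{1\}=H\setminus\{1\}$.

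Next, the neighbours of the vertices of $H$ lie in $Ha$. A vertex $a^{2i}$ has neighbour $ab\cdot a^{2i}\in(ab)^G$ and neighbours $\Delta a^{2i}$. Since $\la a^2\ra a$ is a cyclic coset of odd powers, $\bigcup_i\Delta a^{2i}=\la a^2\ra a$. So every vertex of $Ha$ is within distance $3$. The vertices of $Ha$ at distance $1$ are exactly $S$, and by bipartiteness none of $Ha$ is at distance $2$. Hence $\Ga_3(1)=\la a^2\ra a\setminus S$. This set is nonempty because $|\Delta|\leq k-2<k$, so the diameter is exactly $3$.

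For the girth in (i), bipartiteness rules out triangles, so it suffices to exhibit a $4$-cycle. Take $1$, $ab$, $a^{2}$ (here $a^2\neq 1$ since $k\geq 3$, which is forced by $0<|\Delta|\leq k-2$), and $a^3b$. The vertex $a^3b$ is adjacent to $1$, and $a^3b\cdot(a^2)^{-1}=a^3ba^{-2}=a^5b\in S$, so $a^3b$ is adjacent to $a^2$. Likewise $ab$ is adjacent to $1$, and $ab\cdot a^{-2}=a^3b\in S$, so $ab$ is adjacent to $a^2$. This gives the $4$-cycle $1,ab,a^2,a^3b$, provided $ab\neq a^3b$, which holds since $k\geq 2$. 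Hence the girth is $4$.

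The main obstacle is the bookkeeping of the adjacency convention $y x^{-1}\in S$ in the neighbourhood computations; once (iii) is established, the rest is routine.
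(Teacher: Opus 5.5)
Your proof is correct, but it is organized differently from the paper's. You establish (iii) first, from the observation that $S$ avoids the index-$2$ subgroup $H=\la a^2,b\ra$, so every edge joins the two cosets of $H$. Triangle-freeness and the equality $\Ga_3(1)=\la a^2\ra a\setminus S$ then follow from parity. What remains is computing $\Ga(s)$ for $s\in S$ (to get $\Ga_2(1)$), noting $\Delta\la a^2\ra=\la a^2\ra a$, and exhibiting the $4$-cycle $(1,ab,a^2,a^3b)$, which is the same one the paper uses.

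The paper runs in the opposite direction. From explicit formulas for $\Ga(ab)$ and $\Ga(a^{2i+1})$ it gets $\Ga(ab)\cap S=\Ga(a^{2i+1})\cap S=\emptyset$. It uses this both to exclude triangles and to place the odd powers outside $S$ at distance $3$. Only at the end does it deduce bipartiteness, by checking that each distance layer $S$, $\Ga_2(1)$, $\Ga_3(1)$ is independent, appealing to inner-automorphy to pass from $ab$ to all of $(ab)^G$. Your coset argument is shorter, needs no inner-automorphy, and explains why the bipartition is a coset partition. The paper's route is more computational but records explicit neighbourhood formulas along the way.

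One point is worth making explicit in either version. The reduction to $\pi=1$ via $\theta_a$ is harmless for (i) and (ii), since $\theta_a$ fixes $\la a\ra$ pointwise and swaps $b^G$ with $(ab)^G$. However, it carries $\la a^2,b\ra$ to $\la a^2,ab\ra$. So for $\pi=0$ the parts in (iii) are actually $\la a^2,ab\ra$ and $\la a^2,ab\ra a$; indeed $b\in S$ is then adjacent to $1$. Running your argument with $H=\la a^2,a^{1-\pi}b\ra$ gives the correct version of (iii) for both values of $\pi$ directly.
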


\proof By Corollary~\ref{coro:isomorphic}, we may assume $\pi=1$. Then we write
\begin{equation*}
S=(ab)^G\cup\{a^{2i_1+1},a^{-2i_1-1},\ldots,a^{2i_t+1},a^{-2i_s-1}\}
\end{equation*}
for an integer $1\leq s\leq \lceil(k-2)/2\rceil$. Then $(1,ab,a^2,a^3b)$ is a cycle of length $4$, and
\begin{align}
\nonumber&\Ga(ab)=\la a^2\ra\cup\{a^{2i_1+2}b,a^{-2i_1}b,\ldots,a^{2i_s+2}b,a^{-2i_s}b\},\\
&\Ga(a^{2i+1})=b^G \cup\{a^{2(i_1+i+1)},a^{2(i-i_1)},\ldots,a^{2(i+i_s+1)},a^{2(i-i_s)}\}, \label{eq:Gamma31}
\end{align}
where $1\leq i\leq k$. This implies that
\begin{align}
|\Ga(ab)\cap S|=0 \text{ and }|\Ga(a^{2i+1})\cap S|=0. \label{eq2:Gamma31}
\end{align}
Thus, $\Ga$ contains no triangles, and so $\Ga$ has girth $4$. For each $1\leq j\leq k$, we see that $\{a^{2j},ab\}$ and $\{a^{2j}b,a^{2i_1+1}\}$ are edges of $\Ga$. It follows that $(b^G\cup\la a^2\ra)\setminus\{1\}\subseteq\Ga_2(1)$, which implies that $\Ga(a^{2i+1})\subseteq\Ga_2(1)$. Since $|\la a^2\ra a\cap S|\neq 0$, we have $\la a^2\ra a\setminus S\neq\emptyset$. For each $x\in\la a^2\ra a\setminus S$, we have $x=a^{2\ell+1}$ for some integer $\ell$. By Eq.~\eqref{eq:Gamma31}, we obtain $|\Ga(x)\cap S|=0$, and so $x\in\Ga_3(1)$. Then we conclude that $\la a^2\ra a\setminus S\subseteq\Ga_3(1)$. Since $V(\Ga)=b^G\cup (ab)^G\cup\la a^2\ra\cup\la a^2\ra a$, we have $\Ga_2(1)=b^G\cup\la a^2\ra\setminus\{1\}$ and $\Ga_3(1)=\la a^2\ra a\setminus S$. Thus, $\Ga$ has diameter $3$, and so parts (i) and (ii) hold.

Recall that $\Ga$ is inner-automorphic. Then we derive from Eq.~\eqref{eq2:Gamma31} that $|\Ga(u)\cap S|=0$ for all $u\in S$. Thus, the induced subgraph $[S]$ is an edgeless graph. Moreover, for each $a^{2i+1}\in\la a^2\ra a$, by Eq.~\eqref{eq:Gamma31}, we have $|\Ga(a^{2i+1})\cap \Ga_3(1)|=0$. In particular, the induced subgraph $[\Ga_3(1)]$ is also an edgeless graph. Since
\[
\Ga(a^{2i}b)=\la a^2\ra a\cup\{a^{2i_1+2i+1}b,a^{-2i_1+2i-1}b,\ldots,a^{2i_s+2i+1}b,a^{-2i_s+2i-1}b\}
\]
and
\[
\Ga(a^{2i})=(ab)^G\cup\{a^{2i_1+2i+1},a^{-2i_1+2i-1},\ldots,a^{2i_s+2i+1},a^{-2i_s+2i-1}\},
\]
Since $\Ga_2(1)=b^G\cup\la a^2\ra\setminus\{1\}$ by (ii), we have $\Ga(a^{2i}b)\cap\Ga_2(1)=\emptyset$ and $\Ga(a^{2i})\cap\Ga_2(1)=\emptyset$ for all $1\leq i\leq k$. Hence $|\Ga(v)\cap\Ga_2(1)|=0$ for every $v\in\Ga_2(1)$. Thus, the induced subgraph $[\Ga_2(1)]$ is an edgeless graph. It follows that $\Ga$ has no cycles of length odd, and so $\Ga$ is a bipartite graph with biparts $\la a^2,b\ra$ and $\la a^2,b\ra a$. This completes the proof of part (iii).
\qed

We are ready to proof of Theorem~\ref{Thm:dihedrant} and Corollary~\ref{class:2distran}.

\medskip
\noindent{\bf Proof of Theorem~\ref{Thm:dihedrant}:} By Lemmas~\ref{arctran:dualredu} and~\ref{pro:caseV}, we have Theorem~\ref{Thm:dihedrant} holds. \qed

\medskip
\noindent{\bf Proof of Corollary~\ref{class:2distran}:}
Note that all graphs listed in cases (i)--(iv) of Theorem~\ref{Thm:dihedrant} are $2$-distance-transitive inner-automorphic Cayley graphs. We now assume that $\Ga$ is $2$-distance-transitive. It only need to show that case (v) of Theorem~\ref{Thm:dihedrant} cannot occur.

Assume that $\Ga=\Cay(G,S)$ satisfies case (v) of Theorem~\ref{Thm:dihedrant}. Then $\Ga$ is a bipartite graph of girth $4$ and diameter $3$. Moreover, the connection set $S$ satisfies $(a^\pi b)^G\subseteq S\subseteq(a^\pi b)^G\cup\la a^2\ra a$ with $0<|\la a^2\ra a\cap S|\leq k-2$, where $\pi\in\{0,1\}$ and $n=2k$. Let $|S\cap \la a^2\ra a|=t$. Then $t\leq  k-2$ and $|S|=k+t$. By Corollary~\ref{coro:isomorphic}, we may choose $\pi=1$. Then $(ab)^G\subseteq S\subseteq(ab)^G\cup\la a^2\ra a$.  Moreover, by Lemma~\ref{pro:caseV}, we have
\begin{equation*}
\Ga_2(1)=b^G\cup\la a^2\ra\setminus\{1\} \text{~and~} \Ga_3(1)=\la a^2\ra a\setminus S.
\end{equation*}
In particular, $|\Ga_2(1)|=2k-1$ and $|\Ga_3(1)|=k-t$. Note from Proposition~\ref{conju-D2n} that $\la a^2\ra a \subseteq\Ga(b)\subseteq \la a^2\ra a\cup (ab)^G$. It follows that $|\Ga(b)\cap\Ga_3(1)|=k-t$, and so $|\Ga(b)\cap S|=2t$. Since $\Ga$ is $2$-distance-transitive, we have $|\Ga(u)\cap S|=|\Ga(b)\cap S|=2t$ for all $u\in\Ga_2(1)$. As $\Ga$ has girth $4$, the number of edges between $S$ and $\Ga_2(1)$ is
\begin{align*}
(k+t)(k+t-1)=|S|\cdot(|S|-1)=|\Ga_2(1)|\cdot|\Ga(u)\cap S|=(2k-1)2t,
\end{align*}
which implies that $t^2-(2k-1)t+k^2-k=(t-k)(t-k+1)=0$. Therefore, $t=k$ or $t=k-1$, contradicting to $t\leq k-2$. This completes the proof.\qed

\section{Construction of arc-transitive inner-automorphic Cayley graphs}\label{sec:structure}

In this section, we will prove Theorem~\ref{Thm:dualCay} by a series of lemmas.
Firstly, we present the following hypothesis.

\begin{hypothesis}\label{hypothesis}
Let $G=\la a,b\mid a^{4p}=b^2=1,a^b=a^{-1}\ra\cong\D_{8p}$ for an odd prime $p$, and let $O=\{a^i\mid (i,4p)=1\}$ be the set of all generators of $\la a\ra$. Define
\begin{align*}
\Ga=\Cay(G,S), \text{~where~} S=(a b)^G\cup O.
\end{align*}
Let $A=\Aut(\Ga)$. Let $\Sig$ be the quotient graph of $\Ga_{\la R(a^{2p})\ra}$. Let $\BB=\BB_1\cup\BB_2$ be the set of all $\la R(a^{2p})\ra$-orbits on $G$, where
\begin{align*}
&\BB_1:=\{\{a^{2i+1}b,a^{2p+2i+1}b\},\{a^{2i+1},a^{2p+2i+1}\}\mid 1\leq i\leq p\},\\
&\BB_2:=\{\{a^{2i}b,a^{2p+2i}b\},\{a^{2i},a^{2p+2i}\}\mid 1\leq i\leq p\}.
\end{align*}
\end{hypothesis}

From Lemma~\ref{pro:caseV}, we know that
\begin{align}\label{graph}
S=(ab)^G\cup \la a^2\ra a\setminus\{a^p,a^{3p}\},~\Ga_2(1)=b^G\cup\la a^2\ra\setminus\{1\}, \text{~and~} \Ga_3(1)=\{a^p,a^{3p}\}.
\end{align}
Then $\Ga$ has valency $4p-2$ and exhibits four distinct types of edges, which are as follows:
\begin{align}
&u_{i,j}:=\{a^{2i},a^{2j+1}b\}\in E(\Ga) \text{~for all~} 1\leq i,j\leq 2p; \label{uij}\\
&v_{i,j}:=\{a^{2i},a^{2j+1}\}\in E(\Ga) \text{~for all~} 1\leq i,j\leq 2p \text{~and~} a^{2j-2i+1}\notin\{a^p,a^{3p}\};\label{vij}\\
&w_{i,j}:=\{a^{2i+1},a^{2j}b\}\in E(\Ga) \text{~for all~} 1\leq i,j\leq 2p;\label{wij}\\
&z_{i,j}:=\{a^{2i}b,a^{2j+1}b\}\in E(\Ga) \text{~for all~} 1\leq i,j\leq 2p \text{~and~} a^{2j-2i+1}\notin\{a^p,a^{3p}\}.\label{zij}
\end{align}
Moreover, we have
\begin{align}\label{neigh}
\Ga(a^p)=\Ga(a^{3p})=\Ga_2(1)\setminus\{a^{2p}\} \text{~and~} \Ga(1)=\Ga(a^{2p})=S.
\end{align}
In what follows, we use the notations as above. Now, we study the properties of $\Ga$ and $\Sig$.

\begin{lemma}\label{quotitent}
Suppose Hypothesis~$\ref{hypothesis}$. Then the following holds.
\begin{enumerate}[\rm (i)]
  \item For each $x\in G$, there is a unique vertex $y\in G$ such that $\Ga(x)=\Ga(y)$.
  \item For each $B:=\{x,y\}\in \BB$, the induced subgraph $[B]$ of $\Ga$ is an edgeless graph, and $\Ga(x)=\Ga(y)$.
  \item For each $B\in \BB_1$ and $C\in\BB_2$, either $[B\cup C]$ is an edgeless graph, or $[B\cup C]\cong\C_4$.
       In particular, $\Ga$ is a normal $2$-cover of $\Sig$.
  \item $\Sig\cong\K_{2p,2p}-2p\K_2$ is a bipartite graph with biparts $\BB_1$ and $\BB_2$.
\end{enumerate}
\end{lemma}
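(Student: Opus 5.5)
The plan is to compute neighbourhoods explicitly from the edge list \eqref{uij}--\eqref{zij} and \eqref{neigh}. Everything is governed by the set $S\cap\la a\ra=\la a^2\ra a\setminus\{a^p,a^{3p}\}$. The key observation is that $R(a^{2p})$ fixes this set setwise, since $a^{2p}$ is central and multiplication by it swaps $a^p$ and $a^{3p}$.

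For part (ii), take $B=\{x,xa^{2p}\}\in\BB$. Since $a^{2p}\in Z(G)$, we have $y x^{-1}\in S$ if and only if $y(xa^{2p})^{-1}=yx^{-1}a^{2p}\in Sa^{2p}$. It therefore suffices to check that $Sa^{2p}=S$. For the part $(ab)^G$ this holds because $a^{2p}\in\la a^2\ra$, and for the part $\la a^2\ra a\setminus\{a^p,a^{3p}\}$ it holds by the observation above. Hence $\Ga(x)=\Ga(xa^{2p})$. Moreover $[B]$ is edgeless because $a^{2p}\notin S$, which is immediate from \eqref{graph}.

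For part (i), existence now follows from (ii), so only uniqueness needs work. By vertex-transitivity of $R(G)$ it suffices to treat $x=1$. Suppose $\Ga(y)=\Ga(1)=S$. Then $y\notin S\cup\{1\}$. If $y\in\Ga_3(1)=\{a^p,a^{3p}\}$, then $\Ga(y)\subseteq\Ga_2(1)$ is disjoint from $S$, which is impossible. So $y\in\Ga_2(1)$, and $\Ga(y)\supseteq S$ forces $y$ to be adjacent to all of $(ab)^G$ and all of $S\cap\la a^2\ra a$. For $y=a^{2i}b$, the neighbourhood $\Ga(a^{2i}b)$ contains $\la a^2\ra a$, which includes $a^p\notin S$; so this case fails. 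For $y=a^{2i}$, we need $a^{2j+1}a^{-2i}\in S\cap\la a^2\ra a$ for every $a^{2j+1}\in S\cap\la a^2\ra a$. That is, the set $O$ of generators of $\la a\ra$ must be stable under multiplication by $a^{-2i}$. Counting shows $a^{2i}\in\{1,a^{2p}\}$: $O$ is $O'\cup O'a^{2p}$ for the odd residues coprime to $p$, and a translate of the residues mod $p$ avoiding $0$ is stable only under the trivial shift. This residue argument (working modulo $p$ via $\ZZ_{4p}\cong\ZZ_4\times\ZZ_p$) is the main obstacle, and I would do it carefully.

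For part (iii), take $B=\{a^{2i+1}c,a^{2i+1+2p}c\}\in\BB_1$ and $C=\{a^{2j}c',a^{2j+2p}c'\}\in\BB_2$. Vertices in $\la a\ra$ and vertices in $\la a\ra b$ of the same parity class are never adjacent, and edges occur only between opposite parities. By (ii), adjacency between $B$ and $C$ is all-or-nothing: if one edge exists, then all four edges exist, because neighbourhoods are constant on blocks. Since each block is edgeless, $[B\cup C]$ is either edgeless or $\K_{2,2}\cong\C_4$. This means every edge of $\Sig$ lifts to a $\K_{2,2}$, so each vertex has exactly $2$ neighbours in each adjacent block. Since $\la R(a^{2p})\ra$ is normal in $A$ (by (i), it is the group generated by the "twin" involution, which commutes with $A$), $\Ga$ is a normal $2$-cover.

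For part (iv), I would compute the valency: $\Sig$ has valency $(4p-2)/2=2p-1$, and it has $2p$ blocks in each of $\BB_1$ and $\BB_2$. It is bipartite because the bipartition of $\Ga$ from Lemma~\ref{pro:caseV}(iii) is a union of blocks. Each block of $\BB_1$ is non-adjacent to exactly one block of $\BB_2$, namely the one containing its $\Ga_3$-partner. For instance, the block $\{1,a^{2p}\}$ misses $\{a^p,a^{3p}\}$. The non-adjacent pairs therefore form a perfect matching, which gives $\Sig\cong\K_{2p,2p}-2p\K_2$.
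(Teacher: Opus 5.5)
Your proof is correct. Parts (ii)--(iv) follow the paper's reasoning but are more direct. You obtain $\Ga(x)=\Ga(xa^{2p})$ at every vertex at once from $Sa^{2p}=S$ and the centrality of $a^{2p}$. The paper instead checks this at $\{1,a^{2p}\}$ and transports it by $R(G)$. Your ``all-or-nothing'' observation also replaces the paper's explicit exhibition of the $4$-cycle $(x,y,xa^{2p},ya^{2p})$.

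The genuine difference is the uniqueness in (i).

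\begin{itemize}
\item \textbf{The paper's route} uses no arithmetic. Suppose $g\in\Ga_2(1)\setminus\{a^{2p}\}$ had $\Ga(g)=S$. The set $\Ga(a^p)$ has $4p-2$ elements, all lying in $\Ga_2(1)$, which has only $4p-1$ elements. So $a^p$ would be adjacent to $a^{2p}$ or to $g$, forcing $a^p\in S$, a contradiction.
\item \textbf{Your route} computes $\Ga(a^{2i}b)\supseteq\la a^2\ra a\ni a^p$ and $\Ga(a^{2i})=(ab)^G\cup Oa^{2i}$, which reduces the question to $Oa^{2i}=O$. The step you flag as the main obstacle is actually immediate. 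By the Chinese remainder theorem, the exponents of $O$ reduce modulo $p$ onto $\ZZ_p\setminus\{0\}$, and $(\ZZ_p\setminus\{0\})+c=\ZZ_p\setminus\{c\}$. Hence $p\mid i$ and $a^{2i}\in\{1,a^{2p}\}$.
\end{itemize}

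The paper's argument is shorter and relies only on $|S|=4p-2$ and the distance partition from Lemma~\ref{pro:caseV}. Yours instead explains, via the specific shape of $O$, why no other even translate fixes $S$.

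Your twin-map argument is a real bonus. The paper's ``in particular, normal $2$-cover'' rests only on $\la R(a^{2p})\ra\unlhd R(G)$. You show something stronger: $R(a^{2p})$ is exactly the map sending each vertex to its unique twin, so it commutes with every element of $A=\Aut(\Ga)$. Consequently $\BB$ is $A$-invariant. This is precisely what is needed, and left implicit, when $A$ is made to act on $\BB$ in Lemma~\ref{kernel}.
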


\proof By Eq.~\eqref{neigh} we know that $\Ga(1)=\Ga(a^{2p})=S$. Let $g\in G\setminus\{1,a^{2p}\}$ be such that $\Ga(g)=S$. By Lemma~\ref{pro:caseV}, $\Ga$ is a bipartite graph and $\Ga_3(1)=\{a^p,a^{3p}\}$, which implies that $g\in \Ga_2(1)$ and $\Ga(a^p)\subseteq \Ga_2(1)$. Note that $|\Ga_2(1)\setminus\{a^{2p},g\}|=4p-3$ and $|S|=4p-2$. Thus, $|\Ga(a^p)\cap\{a^{2p},g\}|\neq\emptyset$, that is, $a^p$ is adjacent to $a^{2p}$ or $g$, contradicting to $S=(ab)^G\cup \la a^2\ra a\setminus\{a^p,a^{3p}\}$ and $\Ga(g)=S$. Consequently, for the vertex $1\in G$, there exists precisely one other vertex, namely $a^{2p}$, such that $\Ga(1)=\Ga(a^{2p})$. By the vertex-transitivity of $\Ga$, part (i) holds.

Recall that $S=(ab)^G\cup O$ with $O$ the set of all generators of $\la a\ra$. Let $B_1=\{1,a^{2p}\}$. Then $B_1\in\BB$, and $[B_1]$ is an edgeless graph. By Eq.~\eqref{neigh}, we have $\Ga(1)=\Ga(a^{2p})$. Since $\la R(a^{2p})\ra\unlhd R(G)$, it follows that $\BB$ is a complete imprimitive block system of $R(G)$ on $V(\Ga)$. Moreover, $R(G)$ acting on $\BB$ induces a transitive permutation group, and so the quotient graph $\Sig$ is vertex-transitive. Thus, for all $B:=\{x,y\}\in \BB$, the induced subgraph $[B]$ of $\Ga$ is an edgeless graph and $\Ga(x)=\Ga(y)$, as part (ii).

Let $B\in\BB_1$ and let $C\in\BB_2$. Assume that $[B\cup C]$ is not an edgeless graph. By part (ii), $[B\cup C]$ contains an edge $\{x,y\}$ with $x\in B$ and $y\in C$. This implies that $xa^{2p}$ and $ya^{2p}$ is also an edge of $\Ga$. Note that $B=\{a^{2i+1}b,a^{2p+2i+1}b\}$ or $\{a^{2i+1},a^{2p+2i+1}\}$ for some integer $i$ with $1\leq i\leq p$. Let $x=a^{2i+1}b\in B$. Then $y=a^{2s-2i}$ for some $1\leq s\leq 2p$, or $y=a^{2t+2i+2}b$ for some $1\leq t\leq 2p$ and $t\notin\{(p-1)/2,(3p-1)/2\}$. It follows that $(x,y,xa^{2pa},ya^{2p})$ is a cycle of $\Ga$, and hence $[B\cup C]\cong\C_4$. Similarly, we have $[B\cup C]\cong\C_4$ if $x=a^{2i+1}$. In particular, $\Ga$ is a normal $2$-cover of $\Sig$, as part (iii).

Note that the induced subgraphs $[\BB_1]$ and $[\BB_2]$ in $\Sig$ are edgeless graphs. Thus, $\Sig$ is a bipartite graph with biparts $\BB_1$ and $\BB_2$. Moreover, $B_1=\{1,a^{2p}\}$ is adjacent to all elements of $\BB_1\setminus\{\{a^{p},a^{3p}\}\}$ in $\Sig$. This implies that $\Sig$ has order $4p$ and valency $2p-1$. Therefore, $\Sig\cong\K_{2p,2p}-2p\K_2$, as part (iv).\qed

Let $\a_t, \beta_t,\gamma_t$ and $\delta_t$, with $1\leq t\leq p$, be permutations of $\Sym(G)$ as follows:
\begin{align*}
&\a_t=(a^{2t+1}b,a^{2p+2t+1}b), ~\beta_{t}=(a^{2t+1},a^{2p+2t+1}),\\
&\gamma_t=(a^{2t}b,a^{2p+2t}b),~\delta_t=(a^{2t},a^{2p+2t}).
\end{align*}
Let $K=\la\a_t, \beta_t,\gamma_t,\delta_t\mid 1\leq t\leq p\ra$. Then $K\cong\ZZ_2^{4p}$. Next, we investigate the automorphism groups of $\Ga$ and $\Sig$.

\begin{lemma}\label{kernel}
Suppose Hypothesis~$\ref{hypothesis}$. Then the following holds.
\begin{enumerate}[\rm (i)]
  \item $K\cong\ZZ_2^{4p}$ is the kernel of $A$ acting on $\BB$, and in particular, $A/K\leq\Aut(\Sig)$.
  \item $R(a^{2p})=\prod_{t=1}^{p}\a_t\beta_t\gamma_t\delta_t$, and $\la R(a^{2p})\ra$ is the kernel of $R(G)$ acting on $\BB$.
  \item $R(G)/\la R(a^{2p})\ra\cong\D_{4p}$ is regular on $\BB$, and $R(\la a^2,b\ra)/\la R(a^{2p})\ra\cong\D_{2p}$ is regular on $\BB_i$ for $i\in\{1,2\}$.
\end{enumerate}
\end{lemma}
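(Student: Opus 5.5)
I will prove the three parts in the order (ii), (iii), (i), since (ii) and (iii) are direct computations and feed into (i).

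For (ii), I compute $R(a^{2p})$ on each element: $x\mapsto xa^{2p}$. For $x=a^{j}$ this gives $a^{j+2p}$. For $x=a^jb$ it gives $a^jba^{2p}=a^{j-2p}b=a^{j+2p}b$, using $a^{4p}=1$. Thus $R(a^{2p})$ swaps the two elements of every block in $\BB$, which is exactly $\prod_{t=1}^p\a_t\beta_t\gamma_t\delta_t$, since the $4p$ transpositions are disjoint and cover $G$. The blocks of $\BB$ are the $\la R(a^{2p})\ra$-orbits, and $a^{2p}$ is central, so $\la R(a^{2p})\ra\unlhd R(G)$ and $\BB$ is $R(G)$-invariant. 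The kernel of $R(G)$ on $\BB$ consists of those $R(g)$ with $\{x,xa^{2p}\}g=\{xg,xga^{2p}\}$ equal to $\{x,xa^{2p}\}$ for all $x$. Taking $x=1$ forces $g\in\{1,a^{2p}\}$.

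For (iii), $R(G)/\la R(a^{2p})\ra\cong G/\la a^{2p}\ra\cong\D_{4p}$. It is transitive on $\BB$ because $R(G)$ is transitive on $G$, and $|\BB|=4p$ equals its order, so it is regular. The subgroup $\la a^2,b\ra\cong\D_{4p}$ contains $a^{2p}$, and its quotient is $\D_{2p}$ of order $2p$. Each $\BB_i$ is a union of cosets of $\la a^2,b\ra$ modulo $a^{2p}$: $\BB_2$ is the image of $\la a^2,b\ra$ and $\BB_1$ is the image of $\la a^2,b\ra a$. Right multiplication is regular on each coset, which gives regularity on each $\BB_i$.

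For (i), I first show $\BB$ is $A$-invariant. By Lemma~\ref{quotitent}(i),(ii), the blocks of $\BB$ are exactly the classes of the relation $\Ga(x)=\Ga(y)$. This relation is preserved by every graph automorphism, so $A$ permutes $\BB$, and $A/K$ acts as a group of automorphisms of $\Sig$. Next I show $K\le A$. Each transposition $\a_t,\beta_t,\gamma_t,\delta_t$ swaps two vertices with identical neighbourhoods that are non-adjacent (Lemma~\ref{quotitent}(ii)), so it is a graph automorphism. These generators are disjoint transpositions, hence commute, and $K\cong\ZZ_2^{4p}$. Finally, the kernel of $A$ on $\BB$ fixes every block setwise, so it can only swap elements within blocks. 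Therefore it lies in $\prod_{B\in\BB}\Sym(B)=K$, and the kernel equals $K$.

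I expect no step to be a real obstacle. The point needing care is the $A$-invariance of $\BB$. This rests on Lemma~\ref{quotitent}(i): it is the uniqueness of the twin $y$ that makes the twin classes exactly the blocks of $\BB$, rather than possibly larger sets.
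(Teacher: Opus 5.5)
Your proof is correct. Its skeleton matches the paper's, but two steps are argued differently.

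For $K\le A$, the paper checks by hand how $\a_t$ moves each of the edge families $u_{i,j},v_{i,j},w_{i,j},z_{i,j}$, and says the other generators are handled similarly. You instead note that each generator transposes two vertices with equal neighbourhoods (Lemma~\ref{quotitent}(ii)), and such a transposition is automatically a graph automorphism. This is exactly what the paper's edge computations verify, but done uniformly in one line.

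You also prove that $A$ preserves $\BB$ at all, using the uniqueness of twins in Lemma~\ref{quotitent}(i). The paper simply speaks of ``the kernel of $A$ acting on $\BB$'' without establishing $A$-invariance, so your argument supplies a step the paper leaves implicit.

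For the kernel of $R(G)$ on $\BB$, the paper deduces (ii) from (i): $K\cap R(G)$ is a normal $2$-subgroup of $\D_{8p}$, hence equals $\la R(a^{2p})\ra$. That conclusion tacitly requires excluding the normal subgroup $\la R(a^p)\ra\cong\ZZ_4$, e.g.\ because $R(a^p)$ moves the block $\{1,a^{2p}\}$. Your direct test on the block $\{1,a^{2p}\}$ avoids this and makes (ii) and (iii) independent of (i).

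In short, the paper's route is an explicit verification from the edge list. Yours is shorter and rests on a general fact: in any graph, the permutations fixing every class of the relation $\Ga(x)=\Ga(y)$ are automorphisms, and they form precisely the kernel of $\Aut(\Ga)$ on these classes.
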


\proof Recall that Eq.~\eqref{uij}--\eqref{zij}. For each $1\leq t\leq p$, we have that $\a_t$ fixes $v_{i,j}$ for all $1\leq i,j\leq 2p$ and $a^{2j-2i+1}\notin\{a^p,a^{3p}\}$, and also fixes $w_{i,j}$ for all $1\leq i,j\leq 2p$. Moreover, we obtain that
\[
u_{i,j}^{\a_t}=\begin{cases}
u_{i,j},&\text{ if }1\leq i,j\leq 2p \text{ and } j\notin\{t,p+t\}\\
\{a^{2i},a^{2p+2t+1}b\},&\text{ if }1\leq i,j\leq 2p \text{ and } j=t\\
\{a^{2i},a^{2t+1}b\},&\text{ if }1\leq i,j\leq 2p \text{ and } j=p+t,\\
\end{cases}
\]
and
\[
z_{i,j}^{\a_t}=\begin{cases}
z_{i,j},&\text{ if }1\leq i,j\leq 2p \text{ and }j\notin\{t,p+t\}\\
\{a^{2i}b,a^{2p+2t+1}b\},&\text{ if }1\leq i,j\leq 2p \text{ and }j=t\\
\{a^{2i}b,a^{2t+1}b\},&\text{ if }1\leq i,j\leq 2p \text{ and }j=p+t.\\
\end{cases}
\]
It follows that $u_{i,j}^{\a_t},z_{i,j}^{\a_t}\in E(\Ga)$. Then we derive that $\a_t\in A$ for all $1\leq t\leq p$. By a similar argument, we conclude that $\beta_t,\gamma_t,\delta_t\in A$ for every $1\leq t\leq p$. Therefore, $K\leq A$.

Let $L$ be the kernel of $A$ acting on $\BB$. Note that $K$ fixes $\BB$ pointwise, and hence $K\leq L$. Conversely, let $\tau\in L$. Let $\BB=\{B_1,B_2,\ldots,B_{4p}\}$. If $\tau$  interchanges the two elements in $B_i$ for every $1\leq i\leq 4p$, then $\tau=\prod_{t=1}^{p}\a_t\beta_t\gamma_t\delta_t\in K$. Thus, we may suppose that $\tau$ fixes $B_i$ pointwise for some $B_i\in\BB$. Without loss of generality, assume that $\tau$ fixes $B_i$ pointwise for all $1\leq i\leq \ell$ and interchanges the two elements in $B_i$ for every $\ell+1\leq i\leq 4p$. If $\ell=4p$, then $\tau$ fixes $G$ pointwise, and so $\tau=1\in K$. If $\ell<4p$, then $\tau$ is in fact of the product of some $\a_t, \beta_t,\gamma_t$ and $\delta_t$, where $1\leq t\leq p$. Thus, $\tau\in K$, which implies that $K=L$. Furthermore, $A/K\leq\Aut(\Sig)$, as part (i).

To prove part (ii), let $\mu=\prod_{t=1}^{p}\a_t\beta_t\gamma_t\delta_t$. Then for each $x\in G$, we have $x^\mu=xa^{2p}=x^{R(a^{2p})}$, and so $R(a^{2p})=\mu$. By part (i), $K\cong\ZZ_2^{4p}$ is the kernel of $A$ acting on $\BB$,
and so $K\cap R(G)$ is the kernel of $R(G)$ acting on $\BB$. Then $K\cap R(G)$ is a normal $2$-subgroup of $R(G)\cong\D_{8p}$.
Therefore, $K\cap R(G)=\la R(a^{2p})\ra$, which means that $\la R(a^{2p})\ra$ is the kernel of $R(G)$ acting on $\BB$.

Since $R(G)$ is regular on $G$, we have $R(G)/\la R(a^{2p})\ra$ is regular on $\BB$. Recall that $\BB_1=\{\{a^{2i+1}b,a^{2p+2i+1}b\},\{a^{2i+1},a^{2p+2i+1}\}\mid 1\leq i\leq p\}$ and $\BB_2=\{\{a^{2i}b,a^{2p+2i}b\},\{a^{2i},a^{2p+2i}\}\mid 1\leq i\leq p\}$. Since $R(\la a^2,b\ra)$ acts regularly on both $\la a^2,b\ra$ and $\la a^2,b\ra a$, it follows that $R(\la a^2,b\ra)/\la R(a^{2p})\ra\cong\D_{2p}$ is regular on $\BB_i$ for $i=1,2$.
This completes the proof.\qed

By Lemma~\ref{quotitent}, we have $\Sig=\K_{2p,2p}-2p\K_2$. Let $\ov{A}=A/K$, and let $\ov{A}^+=\ov{A}_{\BB_1}=\ov{A}_{\BB_2}$. It is easy to see that, for each $x\in A_{\BB_1}=A_{\BB_2}$, the element $x$ fixes $\BB_1$ pointwise if and only if $x$ fixes $\BB_2$ pointwise. Thus, for  $i\in\{1,2\}$, the group $\ov{A}^+$ is faithful on $\BB_i$, and so we identity $\ov{A}^+$ with its restriction on $\BB_i$. Moreover, $|\ov{A}:\ov{A}^+|=2$, and $\ov{A}_B=A_{B}$ for each $B\in\BB$. It follows from Lemma~\ref{kernel} that $R(\la a^2,b\ra)/\la R(a^{2p})\ra\leq \ov{A}^+$ acts transitively on $\BB_i$. We now show that $\ov{A}^+$ is primitive on $\BB_i$.

\begin{lemma}\label{aut:group}
Suppose Hypothesis~$\ref{hypothesis}$ holds, and let $i\in\{1,2\}$. Then $\ov{A}^+$ is a primitive group on $\BB_i$ containing a regular subgroup $R(\la a^2,b\ra)K/K\cong\D_{2p}$. Moreover, for $B\in\BB$, the triple $(\ov{A}^+,\ov{A}^+_B,|\BB_i|)$ equals either $(\Sy_{2p},\Sy_{2p-1},2p)$ or $(\PGL(2,r^f).\ZZ_e,\ZZ_r^f\rtimes\ZZ_{r^f-1}.\ZZ_e,r^f+1)$ for some positive integers $f$ and $r$ with $r^f\equiv1\pmod{4}$ and $e\mid f$.
\end{lemma}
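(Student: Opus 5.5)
We will prove primitivity first and then read off the triple from Proposition~\ref{Qd-group} and Table~\ref{d-group}. The regular subgroup claim is immediate from Lemma~\ref{kernel}(iii): $R(\la a^2,b\ra)K/K\cong R(\la a^2,b\ra)/(R(\la a^2,b\ra)\cap K)=R(\la a^2,b\ra)/\la R(a^{2p})\ra\cong\D_{2p}$ is regular on each $\BB_i$. Here we use that $R(\la a^2,b\ra)\cap K=\la R(a^{2p})\ra$, because $K\cap R(G)=\la R(a^{2p})\ra$.

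For primitivity, we use that $\Sig\cong\K_{2p,2p}-2p\K_2$ (Lemma~\ref{quotitent}(iv)). First we show that $\ov{A}$ acts arc-transitively on $\Sig$, and in fact that $\ov{A}^+_B$ is transitive on $\Sig(B)$ for $B\in\BB_1$. Since $\Ga$ is arc-transitive, $A_1$ is transitive on $S$. The block $\{1,a^{2p}\}$ is fixed by $A_1$, because $a^{2p}$ is the unique vertex with $\Ga(a^{2p})=\Ga(1)$ by Lemma~\ref{quotitent}(i). Each block of $\BB$ meeting $S$ has both of its elements in $S$. Hence $A_1\le A_{B_1}$ is transitive on the $2p-1$ blocks adjacent to $B_1$. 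Its image lies in $\ov{A}^+_{B_1}$, since $A_1$ preserves the bipartition of $\Ga$ and therefore of $\Sig$.

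Now let $X=\ov{A}^+$ acting on $\BB_2$. In $\K_{2p,2p}-2p\K_2$, each $B\in\BB_1$ has a unique non-neighbour $B'\in\BB_2$, and $X_B=X_{B'}$. Thus $X_{B'}$ is transitive on $\BB_2\setminus\{B'\}$, so $X$ is $2$-transitive on $\BB_2$, and in particular primitive. The same argument with the roles of the parts swapped handles $\BB_1$. The one check needed here is that the pairing $B\mapsto B'$ is $\ov{A}$-equivariant; this holds because it is defined purely graph-theoretically.

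Given primitivity, hence quasiprimitivity, together with the regular dihedral subgroup $\D_{2p}$ of degree $2p$, Proposition~\ref{Qd-group} places $(\ov{A}^+,\ov{A}^+_B,\D_{2p})$ in Table~\ref{d-group} with regular subgroup $H\cong\D_{2p}$, $p$ an odd prime. We then prune the table by degree $2p$, where $2p\equiv 2\pmod 4$ and $2p\ne 4,8,16,12,24$:
\begin{itemize}
\item the $\A_4$, $\Sy_4$ and affine rows have degree a power of $2$, so they are excluded;
\item $\M_{12}$ and $\M_{24}$ have degrees $12$ and $24$, so they are excluded;
\item $\A_{4n}$ has degree divisible by $4$, so it is excluded;
\item the $\PSL(2,r^f).o$ row needs $r^f\equiv3\pmod 4$, hence $r^f+1\equiv0\pmod 4$, so it is excluded.
\end{itemize}

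The main obstacle is the row $\M_{22}.\ZZ_2$ of degree $22=2\cdot 11$, which is not excluded by parity. Here we will argue that $\M_{22}.\ZZ_2$ is $3$-transitive on $22$ points. Then $X_{B'}$, acting on $\BB_2\setminus\{B'\}$, would be $2$-transitive. We would exclude this through the graph structure of $\Ga$ (for instance, via the $2$-cover and the fixed structure $\Ga_3(1)=\{a^p,a^{3p}\}$, using that $\Ga$ is not $2$-arc-transitive), or else by checking that $A/K$ would then have to be $\Aut(\Sig)$-like. If this fails, we allow it as an extra case.

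The remaining rows give exactly $(\Sy_{2p},\Sy_{2p-1},2p)$ and $(\PGL(2,r^f).\ZZ_e,\ZZ_r^f\rtimes\ZZ_{r^f-1}.\ZZ_e,r^f+1)$ with $r^f\equiv1\pmod 4$. Finally, $\ov{A}^+_B$ is the point stabilizer, since $\ov{A}^+_B=\ov{A}^+_{B'}$.
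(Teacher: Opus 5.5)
\textbf{Gap 1: the primitivity step is circular.} You write ``Since $\Ga$ is arc-transitive, $A_1$ is transitive on $S$''. Hypothesis~\ref{hypothesis} does not provide this. In the paper, arc-transitivity is Lemma~\ref{autom}(iv), which is deduced \emph{from} the present lemma (via $\ov{A}^+_B\cong\Sy_{2p-1}$). Granting transitivity of $\ov{A}^+_B$ on $\Sig(B)$, your unique-non-neighbour argument for $2$-transitivity is fine. But nothing in your proposal establishes that transitivity.

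The automorphisms you get for free do not supply it. $R(\la a^2,b\ra)$ and $\Aut(G,S)$ both preserve, as a block system, the splitting of $\BB_2$ into the $p$ blocks inside $\la a\ra$ and the $p$ blocks inside $\la a\ra b$ (similarly for $\BB_1$). So primitivity cannot come from $N_A(R(G))=R(G)\rtimes\Aut(G,S)$ alone. For the same reason, the paper's own one-line argument (that the regular $\D_{2p}$ would have to act regularly on a block $\Delta$) is not a valid inference, so it offers no route either.

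The missing idea is already in Lemma~\ref{quotitent}(ii),(iii). The two vertices of each block have the same neighbourhood, blocks are independent, and adjacent blocks induce $\K_{2,2}$. That is, $\Ga$ is the lexicographic product of $\Sig$ with the edgeless graph $2\K_1$. Hence every automorphism of $\Sig$ lifts to $\Ga$: permute the blocks by it and choose arbitrary bijections between corresponding blocks. So $\ov{A}=\Aut(\Sig)\cong\Sy_{2p}\times\ZZ_2$, and $\ov{A}^+\cong\Sy_{2p}$ acting naturally on each part. This gives the arc-transitivity your argument needs. It also yields primitivity and the triple $(\Sy_{2p},\Sy_{2p-1},2p)$ directly, with no appeal to Table~\ref{d-group}.

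\textbf{Gap 2: the $\M_{22}.\ZZ_2$ row is left open.} You are right that degree $22=2\cdot 11$ survives the degree and parity pruning; the paper's proof wrongly lists $22$ among degrees incompatible with $2p$. But you leave the case open (``we allow it as an extra case''), so the dichotomy in the statement is not proved. The suggested appeals to $\Ga_3(1)$ or to $\Ga$ not being $2$-arc-transitive are not arguments.

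On the table route the case dies by an order count. $\Aut(G,S)\le A_1$ fixes $B=\{1,a^{2p}\}$ and preserves $\la a^2,b\ra$. The element $\theta_{a^4}\in\Aut(G,S)$ has order $p$, so its image in $\ov{A}^+_B$ has order exactly $p$, because $K$ is a $2$-group. For $p=11$ this contradicts $|\PSL(3,4).\ZZ_2|=2^7\cdot 3^2\cdot 5\cdot 7$.
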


\proof Suppose to the contrary that $\ov{A}^+$ is imprimitive on $\BB_i$ for $i\in\{1,2\}$. Let $\Delta$ be a nontrivial block of $\ov{A}^+$ acting on $\BB_i$. Then $1<|\Delta|<2p$. By Lemma~\ref{kernel}, we know that
\begin{align*}
\ov{A}^+\geq R(\la a^2,b\ra)K/K\cong R(\la a^2,b\ra)/(K\cap R(\la a^2,b\ra))= R(\la a^2,b\ra)/\la R(\la a^{2p})\ra\cong\D_{2p}
\end{align*}
and $R(\la a^2,b\ra)K/K$ is regular on $\BB_i$. Then $R(\la a^2,b\ra)K/K$ acting on $\Delta$ must be regular, and hence $|\Delta|=|\BB_i|=2p$, leading to a contradiction. Thus, $\ov{A}^+$ is primitive on $\BB_i$ with $i\in\{1,2\}$, and contains a regular subgroup $R(\la a^2,b\ra)K/K\cong\D_{2p}$.

Note that all primitive permutation groups contains a regular dihedral subgroup are listed in Proposition~\ref{Qd-group}. Let $B\in\BB$. By Table~\ref{d-group}, one of the following holds:
\begin{enumerate}[\rm (I)]
  \item $|\BB_i|\in\{4,8,16,12,22,24,4n\}$, where $n\geq1$ is an integer;
  \item $|\BB_i|=r^f+1$, and $(\ov{A}^+,\ov{A}^+_B)=(\PSL(2,r^f).o,\ZZ_r^f\rtimes\ZZ_{(r^f-1)/2}.o)$ with $r^f\equiv3\pmod{4}$ and $o\leq\ZZ_2\times\ZZ_f$, or $(\PGL(2,r^f).\ZZ_e,\ZZ_r^f\rtimes\ZZ_{r^f-1}.\ZZ_e)$ with $r^f\equiv1\pmod{4}$ and $e\mid f$;
  \item $\ov{A}^+=\Sy_{2n}$, and $\ov{A}^+_B=\Sy_{2n-1}$ with $|\BB_i|=2n$ for a positive integer $n$.
\end{enumerate}
Since $|\BB_i|=2p$ with $p$ an odd prime, we conclude that part (I) cannot occur, and $n=p$ for part (III). If $r^f\equiv3\pmod{4}$, then $4$ divides $r^f+1=2p$, a contradiction. Therefore, we have part (II) with $r^f\equiv1\pmod{4}$ and part (III) with $n=p$ hold, completing the proof.\qed

By Proposition~\ref{Aut:D2n}, we know that $\Aut(G)=\la \theta_{a^i},\tau_{a^j}\mid 1\leq i,j\leq n, (j,4p)=1\ra=\la\theta_{a}\ra\rtimes\la \tau_{a^j}\mid 1\leq j\leq n, (j,4p)=1\ra$,
where
\begin{align*}
&\theta_{a^i}:a\mapsto a,~b\mapsto a^ib, \text{~where~} 1\leq i\leq 4p;\\
&\tau_{a^j}:a\mapsto a^j,~b\mapsto b,\text{~where~} 1\leq j\leq 4p \text{~and~} (j,4p)=1.
\end{align*}
We now determine the full automorphism group $A$, and show that $\Ga$ is arc-transitive.

\begin{lemma}\label{autom}
Suppose Hypothesis~$\ref{hypothesis}$. Let $B=\{1,a^{2p}\}\in\BB$. Then the following holds.
\begin{enumerate}[\rm (i)]
  \item $\Aut(G,S)=\la \theta_{a^{2i}},\tau_{a^j}\mid 1\leq i,j\leq 4p,(j,4p)=1\ra$ is a subgroup of $\Aut(G)$ with index $2$.
  \item $\ov{A}^+\cong\Sy_{2p}$, $\ov{A}\cong\Sy_{2p}\times\ZZ_2$, and $\ov{A}_B\cong\Sy_{2p-1}$.
  \item $A\cong\ZZ_2^{4p}.(\Sy_{2p}\times\ZZ_2)$ and $A_1\cong\ZZ_2^{4p-1}.\Sy_{2p-1}$.
  \item $\Ga$ is arc-transitive.
\end{enumerate}
\end{lemma}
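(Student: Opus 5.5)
Part (i) is a direct computation. An automorphism of $G$ fixes $S=(ab)^G\cup O$ if and only if it fixes $(ab)^G=\{a^{2i+1}b\}$ and $O$. Every $\tau_{a^j}$ maps $a\mapsto a^j$ with $j$ odd, so it fixes $O$. It also maps $a^{2i+1}b\mapsto a^{j(2i+1)}b$, whose exponent is odd, so it fixes $(ab)^G$. The map $\theta_{a^i}$ fixes $O$ pointwise and sends $a^{m}b\mapsto a^{m+i}b$. Hence it preserves $(ab)^G$ exactly when $i$ is even. Using Proposition~\ref{Aut:D2n}, every element of $\Aut(G)$ has the form $\theta_{a^i}\tau_{a^j}$. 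We conclude that $\Aut(G,S)=\la\theta_{a^{2i}},\tau_{a^j}\ra$, which has index $2$ since $|\Aut(G)|=4p\cdot\varphi(4p)$.

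For (ii), start from Lemma~\ref{aut:group}: $\ov{A}^+$ is either $\Sy_{2p}$ or $\PGL(2,r^f).\ZZ_e$ with $r^f+1=2p$. To exclude the $\PGL$ case, I would produce enough automorphisms of $\Sig\cong\K_{2p,2p}-2p\K_2$ that actually lift to $\Ga$. The key observation is the following. By Lemma~\ref{quotitent}, $\Ga$ is obtained from $\Sig$ by replacing each vertex with a $2$-set of twins (equal neighbourhoods, Lemma~\ref{quotitent}(i),(ii)) and each edge with a $\C_4$, i.e.\ $\Ga\cong\Sig[\overline{\K_2}]$ restricted to edges, the lexicographic-type blow-up. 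Indeed, by (iii) every edge of $\Sig$ yields all four edges between the two blocks. Therefore every automorphism of $\Sig$ lifts: choose any bijection on each block compatible with it. So $\ov{A}=\Aut(\Sig)\cong\Sy_{2p}\times\ZZ_2$, the well-known automorphism group of the crown graph, and $K$ is the full kernel by Lemma~\ref{kernel}. It then follows that $\ov{A}^+\cong\Sy_{2p}$ acting naturally on each part, which rules out the $\PGL$ possibility. Consequently $\ov{A}_B$, the stabilizer of a vertex of the crown graph, is $\Sy_{2p-1}$. The step I expect to need care is the claim that twin classes are exactly the blocks and that the blow-up is complete bipartite between adjacent blocks. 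That claim is exactly Lemma~\ref{quotitent}(ii),(iii), so it should be routine.

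For (iii), we have $|A|=|K|\cdot|\ov{A}|=2^{4p}\cdot 2\cdot(2p)!$, so $A\cong\ZZ_2^{4p}.(\Sy_{2p}\times\ZZ_2)$. Since $R(G)$ is regular, $|A_1|=|A|/8p=2^{4p-1}(2p-1)!$. The stabilizer $A_1$ lies in $A_B$, whose image in $\ov{A}$ is $\Sy_{2p-1}$. Its intersection with $K$ is the subgroup of $K$ fixing $1$, namely $\ZZ_2^{4p-1}$, and the map $A_1\to\ov{A}_B$ is onto because $K$ is transitive on $B$. This gives $A_1\cong\ZZ_2^{4p-1}.\Sy_{2p-1}$.

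For (iv), $\ov{A}_B\cong\Sy_{2p-1}$ is transitive on the $2p-1$ neighbours of $B$ in the crown graph. Moreover, $K_1$ contains the transpositions swapping the two elements of each neighbouring block. Hence $A_1$ is transitive on $\Ga(1)$, and since $\Ga$ is vertex-transitive, $\Ga$ is arc-transitive.
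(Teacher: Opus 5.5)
Your proof is correct, and it reaches (ii) by a different and more elementary route than the paper. Parts (i) and (iii) match the paper's argument. The only difference in (iii) is that the paper gets $A_1K/K=\ov{A}_B$ by comparing orders, while you use that $K$ is transitive on $B$; the two are equivalent.

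The real difference is in (ii). The paper never shows that automorphisms of $\Sig$ lift to $\Ga$; it only uses $\ov{A}\le\Aut(\Sig)$. It therefore identifies $\ov{A}^+$ through Lemma~\ref{aut:group}, which rests on the classification of quasiprimitive groups containing a regular dihedral subgroup (Proposition~\ref{Qd-group}). It then excludes $\PGL(2,r^f).\ZZ_e$ arithmetically. Since $\Aut(G,S)\cap K=1$, the group $\Aut(G,S)$ embeds in $\ov{A}^+_B$, and $p$ divides $|\Aut(G,S)|=4p(p-1)$ but not $r^f(r^f-1)e$ when $r^f=2p-1$.

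Your observation that $\Ga\cong\Sig[\ov{\K_2}]$ makes $A\to\Aut(\Sig)$ surjective outright. Here each block is an independent twin pair and each edge of $\Sig$ becomes a $\K_{2,2}$. As a result you need neither Lemma~\ref{aut:group}, nor the classification, nor part (i) for (ii). It also gives the sharper conclusion $A=\Sy_2\wr\Aut(\Sig)$, so the extension $\ZZ_2^{4p}.(\Sy_{2p}\times\ZZ_2)$ in fact splits. What the paper's normal-quotient-plus-classification strategy buys is generality: it would still apply to a normal $2$-cover whose edges lift to matchings rather than complete bipartite pieces, where automorphisms of the quotient need not lift.

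Your (iv) is also the right way to organise that step. You combine transitivity of $\ov{A}_B$ on the $2p-1$ blocks inside $S$ with the within-block swaps $\alpha_t,\beta_t\in K_1$. The paper instead phrases it via the kernel of $A_1$ on $S$ itself. As written, $K_1$ is not contained in that kernel, since $\alpha_t$ moves points of $(ab)^G\subseteq S$. So the paper's argument has to be read on the set of blocks, exactly as you do.
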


\proof Let $H= \la \theta_{a^{2i}},\tau_{a^j}\mid 1\leq i,j\leq 4p,(j,4p)=1\ra$. Then $H$ is a subgroup of $\Aut(G)$ with index $2$. Recall that $S=(ab)^G\cup O$ with $O$ the set of all generators of $\la a\ra$. Then for each $\a\in\Aut(G)$, $\a$ fixes $S$ setwise if and only if $\a\in H$. Therefore, $\Aut(G,S)=H$, as part (i).

Note that $\Aut(G,S)\leq A_1$ and $\Aut(G,S)$ fixes $B$ setwise. Recall the definition of $K$ above Lemma~\ref{kernel}. Clearly, $\Aut(G,S)\cap K=1$. It follows that $\Aut(G,S)\cong\Aut(G,S)/(\Aut(G,S)\cap K)\cong\Aut(G,S)K/K\leq \ov{A}_B=\ov{A}^+_B$. By Lemma~\ref{aut:group}, we obtain
$(\ov{A}^+,\ov{A}^+_B,|\BB_i|)=(\PGL(2,r^f).\ZZ_e,\ZZ_r^f\rtimes\ZZ_{r^f-1}.\ZZ_e,r^f+1)$, or $(\Sy_{2p},\Sy_{2p-1},2p)$. If $\ov{A}^+_B=\ZZ_r^f\rtimes\ZZ_{r^f-1}.\ZZ_e$, then $2p=r^f+1$, and hence $(2p,r^fe)=1$. Since $\Aut(G,S)\leq \ov{A}^+_B$, we have that $2p$ divides $|\ov{A}^+_B|$. This implies that $2p$ is a divisor of $r^f-1$, which is clearly impossible. Therefore, $\ov{A}^+=\Sy_{2p}$ and $\ov{A}^+_B=\ov{A}_B=\Sy_{2p-1}$. Since $|\ov{A}:\ov{A}^+|=2$ and $\ov{A}\leq \Aut(\Sig)\cong\Sy_{2p}\times\ZZ_2$, we conclude that $\ov{A}\cong \Sy_{2p}\times\ZZ_2$, as part (ii).

By part (ii), we have $A/K=\ov{A}\cong \Sy_{2p}\times\ZZ_2$. Recall that  $K=\la\a_t, \beta_t,\gamma_t,\delta_t\mid 1\leq t\leq p\ra\cong\ZZ_2^{4p}$. Then $A\cong\ZZ_2^{4p}.(\Sy_{2p}\times\ZZ_2)$. Since $|A:A_1|=|G|=8p$, we have $|A_1|=2^{4p-1}\cdot(2p-1)!$. Notice that $K_1=\la\a_t, \beta_t,\gamma_t,\delta_r\mid 1\leq t,r\leq p,r\neq p\ra\cong\ZZ_2^{4p-1}$. It follows from $A_1K/K\cong A_1/(A_1\cap K)\cong A_1/K_1\leq \ov{A}_B\cong \Sy_{2p-1}$ that $A_1/K_1=\ov{A}_B$. Hence $A_1\cong\ZZ_2^{4p-1}.\Sy_{2p-1}$, as part (iii).

Let $L$ be the kernel of $A_1$ acting on $S$. Then $K_1\leq L$. It follows from $A_1/L\cong (A_1/K_1)/(L/K_1)\cong \Sy_{2p-1}/(L/K_1)$ that $L/K_1\in \{1,\A_{2p-1},\Sy_{2p-1}\}$. Since $\Aut(G,S)\leq A_1$ and $\Aut(G,S)$ acting on $S$ has two orbits $(ab)^G$ and $O$, we have that $A_1$ acting on $S$ has at most two orbits. In particular, $A_1$ acts on $S$ by changing at least $|O|=2p-2$ points. This implies that $L/K_1=1$, and so $L=K_1$. Thus, $A_1^S\cong\Sy_{2p-1}$. If $A_1^S$ is intransitive on $S$, then $(ab)^G$ and $O$ are all orbits of $A_1^S$ acting on $S$. It follows that $A_1^S$ acts on $(ab)^G$ induces a transitive permutation group of degree $2p$, and so $A_1^S\cong\Sy_{2p-1}$ has a subgroup of index $2p$, which is clearly impossible. Thus, $A_1$ is transitive on $S$, and so $\Ga$ is arc-transitive. \qed

We are ready to proof of Theorem~\ref{Thm:dualCay}.

\medskip
\noindent{\bf Proof of Theorem~\ref{Thm:dualCay}:}
Let $G=\la a,b\mid a^{4p}=b^2=1,a^b=a^{-1}\ra\cong\D_{8p}$ with $p$ an odd prime, and let $S_\pi=(a^\pi b)^G\cup \{a^i\mid (i,4p)=1\}$ with $\pi\in\{0,1\}$. By Propositions~\ref{dual} and~\ref{conju-D2n}, $\Cay(G,S_\pi)$ is a connected inner-automorphic Cayley graph. Moreover, $\Cay(G,S_0)\cong \Cay(G,S_1)$ by Corollary~\ref{coro:isomorphic}. It follows from Lemma~\ref{autom} that $\Cay(G,S_\pi)$ is arc-transitive and $\Aut(\Cay(G,S_\pi))\cong\ZZ_2^{4p}.(\Sy_{2p}\times\ZZ_2)$. This completes the proof.
\qed

At the end of this section, we provide further arc-transitive inner-automorphic Cayley graphs satisfying part (v) of Theorem~\ref{Thm:dihedrant}, which are non-isomorphic to the graphs in Theorem~\ref{Thm:dualCay}.

\begin{example}\label{D60}
Let $G=\la a,b\mid a^{30}=b^2=1,a^b=a^{-1}\ra\cong\D_{60}$, and define the following subsets of $G$:
\begin{align*}
O_1=\{a^5,a^{25}\},~O_2=\{a^3,a^9,a^{21},a^{27}\},~O_3=\{a,a^{7},a^{11},a^{13},a^{17},a^{19},a^{23},a^{29}\}.
\end{align*}
Then $O_1$, $O_2$ and $O_3$ are precisely the set of all elements of order $6$, $10$ and $30$ in $G$, respectively. For $\pi\in\{0,1\}$, write
\begin{align*}
S_\pi=(a^\pi b)^G \cup O_1 \cup O_3,~R_\pi=(a^\pi b)^G \cup O_2 \cup O_3.
\end{align*}
Then $|S_\pi| = 25$ and $|R_\pi| = 27$. Now let $\Ga_{25}^\pi=\Cay(G, S_\pi)$ and let $\Ga_{27}^\pi=\Cay(G, R_\pi)$. Using Magma~\cite{Magma}, we find that the following holds:
\begin{enumerate}[\rm (i)]
  \item $\Ga_{25}^\pi$ and $\Ga_{27}^\pi$ are connected arc-transitive inner-automorphic Cayley graphs;
  \item $\Ga_{25}^\pi$ and $\Ga_{27}^\pi$ are bipartite graphs of girth $4$ and diameter $3$;
  \item $|\Aut(\Ga_{25}^\pi)| = 2^{41}\cdot 3^{14}\cdot5^{13}$ and $|\Aut(\Ga_{27}^\pi)|= 2^{29}\cdot3^{24}\cdot5^2\cdot57$.
\end{enumerate}
\end{example}

\begin{example}\label{D84}
Let $G=\la a,b\mid a^{42}=b^2=1,a^b=a^{-1}\ra\cong\D_{84}$  and define
\begin{align*}
&O_1=\{a^7,a^{35}\},~O_2=\{a^3,a^9,a^{15},a^{27},a^{33},a^{39}\},~\text{and}\\
&O_3=\{a,a^5,a^{11},a^{13},a^{17},a^{19},a^{23},a^{25},a^{29},a^{31},a^{37},a^{41}\}.
\end{align*}
Then $O_1$, $O_2$ and $O_3$ are the set of all elements of order $6$, $14$ and $42$ in $G$, respectively.
For $\pi\in\{0,1\}$, write
\begin{align*}
S_\pi=(a^\pi b)^G \cup O_1 \cup O_3,~R_\pi=(a^\pi b)^G \cup O_2 \cup O_3.
\end{align*}
Then $|S_\pi| = 35$ and $|R_\pi| = 39$. Let $\Ga_{35}^\pi=\Cay(G, S_\pi)$ and let $\Ga_{39}^\pi=\Cay(G, R_\pi)$.
Then computing by Magma~\cite{Magma}, the following holds:
\begin{enumerate}[\rm (i)]
  \item $\Ga_{35}^\pi$ and $\Ga_{39}^\pi$ are connected arc-transitive inner-automorphic Cayley graphs;
  \item $\Ga_{35}^\pi$ and $\Ga_{39}^\pi$ are bipartite graphs of girth $4$ and diameter $3$;
  \item $|\Aut(\Ga_{35}^\pi)| = 2^{53}\cdot 3^{26} \cdot 5^{13} \cdot 7^{12}$ and $|\Aut(\Ga_{39}^\pi)|= 2^{40}\cdot 3^{33} \cdot 5^2 \cdot 7^2\cdot 11 \cdot13$.
\end{enumerate}
\end{example}

\section*{Acknowledgements}
%The authors would like to thank the anonymous referee for careful reading and valuable suggestions to this paper.
This work was partially supported by the National Natural Science Foundation of China (12501469).

\end{document}